\let\oldmarginpar\marginpar
\renewcommand\marginpar[1]{\-\oldmarginpar[\raggedleft\footnotesize #1]%
{\raggedright\footnotesize #1}}
\newcommand{\cD}{\mathcal{D}}
\newcommand{\cF}{\mathcal{F}}
\newcommand{\cM}{\mathcal{M}}
\newcommand{\cO}{\mathcal{O}}
\newcommand{\bpi}{\bar{\pi}}
\newcommand{\tU}{{\tilde U}}
\newcommand{\tX}{{\tilde X}}
\newcommand{\id}{\mathrm{id}}
\newcommand{\hpi}{\hat{\pi}}
\newcommand{\br}{\bar{\rho}}
\newcommand{\Z}{\mathbb{Z}}
\newcommand{\z}{\zeta}
\newcommand{\Coh}{\mathfrak{Coh}}
\newcommand{\IC}{\mathrm{IC}}
\newcommand{\cIC}{\mathcal{IC}}
\newcommand{\fn}{\mathfrak{n}}
\newcommand{\fm}{\mathfrak{m}}
\DeclareMathOperator{\bSpec}{\mathbf{Spec}}
\DeclareMathOperator{\Spec}{Spec}
\DeclareMathOperator{\depth}{depth}
\DeclareMathOperator{\codim}{codim}
\DeclareMathOperator{\ccodim}{c-codim}
\DeclareMathOperator{\Supp}{Supp}
\numberwithin{equation}{section}
   \newtheoremstyle{example}{\topsep}{\topsep}%
     {}
     {}
     {\bfseries}
     {}
     {\newline}
     {\thmname{#1}\thmnumber{ #2}\thmnote{ #3}}
\newtheorem{theorem}{Theorem}[section]
\newtheorem{proposition}[theorem]{Proposition}
\newtheorem{lemma}[theorem]{Lemma}
\newtheorem{corollary}[theorem]{Corollary}
\DeclareMathOperator{\Ga}{\Gamma}
\DeclareMathOperator{\pD}{{}^p\cD}
\DeclareMathOperator{\tle}{\tau^p_{\le 0}}
\DeclareMathOperator{\tge}{\tau^p_{\ge 0}}
\DeclareMathOperator{\M}{\mathcal{M}}
\DeclareMathOperator{\Mp}{\mathcal{M}^p_\pm}
\newcommand{\hr}{\hat{\rho}}
\newcommand{\hz}{\hat{\z}}
\newcommand{\g}{\gamma}
\newcommand{\Osh}{\mathcal{O}}
\newcommand{\F}{\mathcal{F}}
\newcommand{\hX}{\hat{X}}
\theoremstyle{definition}
\newtheorem{definition}[theorem]{Definition}
\theoremstyle{remark}
\newtheorem{rmk}[theorem]{Remark}
\theoremstyle{example}
\newtheorem{example}{Example}[subsection]
\title[Generalized Serre conditions]{Generalized Serre
  conditions and perverse coherent sheaves} 
\author{Christopher L.~Bremer}
\address{Department of Mathematics\\
  Louisiana State University\\
  Baton Rouge, LA 70803} 
\email{cbremer@math.lsu.edu} 
\author{Daniel
  S.~Sage} 
\email{sage@math.lsu.edu} 
\thanks{The research of the
  second author was partially supported by NSF grant~DMS-0606300 and
 NSA grant~H98230-09-1-0059.}  
\subjclass[2010]{Primary:14B15, 14F43} \keywords{perverse coherent
  sheaves, Serre conditions, Macaulayfication, $S_2$-ification}
\begin{document}
\begin{abstract}

  In algebraic geometry, one often encounters the following problem:
  given a scheme $X$, find a proper birational morphism $Y \to X$
  where the geometry of $Y$ is ``nicer'' than that of $X$.  One
  version of this problem, first studied by Faltings, requires $Y$ to
  be Cohen-Macaulay; in this case $Y \to X$ is called a
  Macaulayfication of $X$.  In another variant, one requires $Y$ to
  satisfy the Serre condition $S_r$.  In this paper, the authors
  introduce generalized Serre conditions--these are local cohomology
  conditions which include $S_r$ and the Cohen-Macaulay condition as
  special cases.  To any generalized Serre condition $S_\rho$, there
  exists an associated perverse $t$-structure on the derived category
  of coherent sheaves on a suitable scheme $X$.  Under appropriate
  hypotheses, the authors characterize those schemes for which a
  canonical finite $S_\rho$-ification exists in terms of the
  intermediate extension functor for the associated perversity.
  Similar results, including a universal property, are obtained for a
  more general morphism extension problem called $S_\rho$-extension.

\end{abstract}

\maketitle
\section{Introduction}

In algebraic geometry, one often encounters the following problem:
given a scheme $X$, find a proper birational morphism $Y \to X$ where
the geometry of $Y$ is ``nicer'' than that of $X$.  The strongest
version of this problem is the resolution of singularities.  On the
other hand, there are many weaker variations expressed in terms of
local cohomology.  For example, one might require $Y$ to satisfy
Serre's condition $S_2$.  In another version, introduced by
Faltings~\cite{faltings}, $Y$ is required to be Cohen-Macaulay; $Y\to
X$ is then called a Macaulayfication of $X$. Kawasaki has shown that
Macaulayfications exist for a broad class of schemes~\cite{kawasaki},
and they can often be constructed in contexts where desingularizations
do not exist.

In general, Macaulayfications are not canonical.  However, it was shown in
\cite{AS2} that there may exist a finite Macaulayfication, restricting
to an isomorphism over the Cohen-Macaulay locus, that satisfies an
appropriate universal property.  This is a special case of a more
general morphism extension problem.  

Consider the following diagram,
where $U$ is an open dense subscheme 
of $X$ and $\z_1$ is a finite
dominant morphism.
\[
\xymatrix{
*+{\;\tU\;} \ar@{^{(}.>}@<-.5ex>[r]\ar[d]_{\z_1} & \tX \ar@{.>}[d]^{\z} \\ 
*+{\;U\;} \ar@{^{(}->}[r]^{j} & X
}
\]

In \cite{AS2}, Achar and Sage investigated the problem of constructing
an ``$S_2$-extension'' of $(X,\z_1)$: this is a scheme $\tX$ together
with a finite morphism $\z:\tX\to X$ such that $\tX$ contains $\tU$ as
an open subscheme, $\z$ extends $\z_1$, $\tX$ is $S_2$ off of $\tU$,
and $(\tX,\z)$ satisfies an appropriate universal property.  If a pair
satisfies all conditions except for the universal property, it is
called a weak $S_2$-extension.  They applied the theory of perverse
coherent sheaves to show that the $S_2$-extension exists under
suitable hypotheses (for example, the ``componentwise codimension'' of
the complement of $U$ must be at least $2$) .  When $\z_1$ is the
identity, $S_2$-extension gives a canonical $S_2$-ification, which
restricts to an isomorphism over $U$.  Achar and Sage used similar
techniques to show that a canonical finite Macaulayfication exists
when a certain perverse coherent sheaf is defined and a sheaf (i.e.,
concentrated in degree $0$).  Moreover, in this case, the finite
Macaulayfication coincides with the $S_2$-ification.  This last fact
was first observed in a local ring context by
Schenzel~\cite{schenzel}.

In this paper, we strengthen and generalize these results.  A summary
of the theory of perverse coherent sheaves appears in
Section~\ref{section2}.  In Section~\ref{section3}, we introduce
``generalized Serre conditions''--these are local cohomology
conditions which include the Serre conditions $S_r$ and the
Cohen-Macaulay condition as special cases.  A generalized Serre
condition is defined in terms of a function $\rho:\Z_{\ge 0}\to
\Z_{\ge 0}$ which has slope at most one and satisfies $\rho(0)=0$.
These conditions have a close relationship to perversities in the
theory of coherent sheaves, and we show how to associate an
$S_\rho$-perversity to the pair $(X,U)$ in Definition~\ref{prho}.  We
also provide an example of an $S_\rho$ variety that is ``strictly
$S_\rho$ through codimension $n$''.  We then investigate the
$S_\rho$-extension problem in Section~\ref{section4} and show that
under appropriate assumptions, an $S_\rho$-extension exists if and
only if a certain intermediate extension sheaf (with respect to the
$S_\rho$-perversity) exists and is a sheaf (Theorem~\ref{extension}).
If it exists, it is the only weak $S_\rho$-extension; moreover, it
coincides with the $S_2$-extension.  This result is applied to the
finite $S_\rho$-ification problem in Theorem~\ref{srification}.  We
obtain similar results when the codimension condition is relaxed,
although here stronger hypotheses are required.

\section{Serre conditions and perverse coherent
  sheaves}\label{section2}

Throughout the paper, $X$ will be a semi-separated scheme of finite
type over a Noetherian base scheme $S$ admitting a dualizing complex.
We will further assume that $X$ is equidimensional.  For any $x \in
X$, we will write $\bar{x} \subset X$ for the subscheme corresponding
to the closure of $\{ x\}$.  We will write $\codim x$ for
$\codim\bar{x}$.  If $Z$ is a closed subscheme of $X$, we will let
$\ccodim(Z)$ denote the ``componentwise codimension'' (or
``c-codimension'') of $Z$ in $X$; if the $X_i$'s are the irreducible
components of $X$, then
\begin{equation*}\ccodim(Z)=\min_{Z\cap X_i\ne\varnothing}\codim_{X_i}Z\cap
  X_i.
\end{equation*}

Let $\Coh(X)$ be the category of coherent sheaves on $X$, and let
$\cD(X)$ be the bounded derived category of $\Coh(X)$.  Suppose that
$i_x : \{x\} \to X$ is the inclusion of a point.  If $\F \in \Coh(X)$,
we define $i_x^* (\F)$ to be the stalk of $\F$ at $x$.  Since $i_x^*$
is exact, it induces an exact functor from $\cD(X)$ to the bounded
derived category of $\Osh_{X, x}$-modules (written $\cD(\Osh_{X,x})$).
We define $\Ga_x (\F)$ to be subsheaf of $i_x^* (\F)$ consisting of
sections with support on $\overline{\{x\}}$.  This is a left exact
functor, and we write $i_x^! : \cD(X) \to \cD(\Osh_{X, x})$ for the
corresponding derived functor.

Recall that the \emph{depth} of a coherent sheaf $\cF$ at $y$ is
defined to be $r$ if $H^r(i^!_x\cF)$ is the first non-vanishing local
cohomology sheaf; in other words, $H^r (i^!_x\cF)\ne 0$ and $H^k
(i^!_x\cF)=0$ for $k < r$.  We will denote the depth of $\cF$ at $y$ by
$\depth_y (\cF)$. 

\begin{definition}
Let $r$ be a positive integer.
We say that a coherent sheaf $\cF \in \Coh(X)$ is $S_r$ if $\depth_x (\cF) = \dim_x(\cF)$
for all $x \in X$ satisfying $\dim_x(\cF) \le r$.  The sheaf $\cF$ is Cohen-Macaulay
if $\depth_x(\cF) = \dim_x(\cF)$ for all $x \in X$.
\end{definition}

Now, we recall the Deligne-Bezrukavnikov theory of perverse coherent
sheaves and its extension by Achar and Sage~\cite{AB, AS2}. 
\begin{definition}
A \emph{perversity} is a  function $p : X \to \Z$ satisfying
\begin{equation*}
\begin{aligned}
p(y) & \ge p(x) & \quad &\text{and} \\
\codim(y) - p(y) & \ge \codim(x) - p(x) & \qquad &\text{whenever $\codim (y) \ge \codim (x)$.}
\end{aligned}
\end{equation*}
(In particular, $p(x)$ only depends on $\codim(x)$.)  Given a
perversity $p$, we define the dual perversity $\bar{p}$ by $\bar{p}
(x) = \codim(\bar{x}) - p(x)$.
\end{definition}

By \cite[Theorem 3.10]{AB}, a perversity determines two full subcategories,
$\pD(X)^{\le 0}$ and $\pD(X)^{\ge 0}$, such that $(\pD(X)^{\le 0}, \pD(X)^{\ge 0})$
is a $t$-structure on $\cD(X)$.
Specifically, 
\begin{align*}
\pD(X)^{\le 0} &= \{ \F \in \cD(X) \mid \forall x \in X, H^k (i^*_x
(\cF)) = \{0\}  \text{ whenever }
k > p(x) \} \\
\pD(X)^{\ge 0} &= \{ \F \in \cD(X) \mid \forall x \in X, H^k (i^!_x
(\cF)) = \{0\} \text{ whenever } k < p(x) \}.
\end{align*}
We call this $t$-structure the \emph{perverse} $t$-structure with
respect to the perversity $p$.  There are associated truncation
functors $\tle : \cD(X) \to \pD(X)^{\le 0}$ and $\tge : \cD(X) \to
\pD(X)^{\ge 0}$.  The heart of the $t$ structure, denoted $\M^p (X)$,
is the category of \emph{perverse coherent sheaves} with respect to
$p$.   For example, if $p$ is the trivial perversity defined by $p(x) =
0$ for all $x \in X$, then one easily sees that $\M^p (X)$ is the
usual category of coherent sheaves on $X$. 

One of the powerful tools in the theory of perverse coherent sheaves
is the intermediate extension functor.  Suppose that $U \subset X$ is
an open dense subscheme of $X$, and let $Z = X\setminus U$.  In
certain cases, there exists an intermediate extension functor $\cIC^p$
which defines an equivalence between a subcategory of $\M^p (U)$ and a
subcategory of $\M^p (X)$.  The definition of $\cIC^p$ requires the
construction of two new perversities associated to $p$, denoted $p^+$
and $p^-$, which depend on the pair $(X,U)$ (although this dependence
will be suppressed in the notation).  In order to ensure that the
domain of $\cIC^p$ is non-empty, we will eventually need to impose
conditions on the perversity $p$ as well as on $\ccodim (Z)$.

\begin{definition} Fix a pair $(X,U)$ as above.  Let $p$ be a
  perversity on $X$, and let $z$ be any point in $X$ such that
  $\codim(z)=\ccodim(Z)$.  We define
\begin{equation*}
p^- (x) = \left\{ 
\begin{array}{ll}
p(x) - 1 & \text{if} \quad p(x) \ge p(z), \\
p(x) & \text{if} \quad p(x) < p(z),
\end{array}
\right.
\end{equation*}
and 
\begin{equation*}
p^+ (x) = \left\{ 
\begin{array}{ll}
p(x) + 1 & \text{if} \quad \codim (x) - p(x) \ge \codim (z) - p(z), \\
p(x) & \text{if} \quad \codim (x) - p(x) < \codim (z) - p(z).
\end{array}
\right.
\end{equation*}
Additionally, we define $\Mp(U)$ and $\Mp(X)$
to be the full subcategories
\begin{equation*}
\begin{aligned}
\Mp(U) &= {}^{p^-} \cD(U)^{\le 0} \cap {}^{p^+} \cD(U)^{\ge 0} \subset \M^p (U), \\
\Mp(X) &= {}^{p^-} \cD(X)^{\le 0} \cap {}^{p^+} \cD(X)^{\ge 0} \subset \M^p (X).
\end{aligned}
\end{equation*}
\end{definition}

\begin{proposition}[{\cite[Proposition 2.3]{AS2}}]
\label{equiv}
Let $j : U \to X$ be the inclusion map.  Then,
$j^* :  \Mp (X) \to \Mp (U)$ is an equivalence of categories.
\end{proposition}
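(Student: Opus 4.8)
The plan is to treat the open immersion $j:U\to X$, with closed complement $Z$ and inclusion $i:Z\to X$, as part of the standard recollement, and to establish the two halves of the assertion — that $j^*$ is fully faithful and that it is essentially surjective — separately. First one checks that $j^*$ is well defined as a functor $\Mp(X)\to\Mp(U)$: since $U$ is dense and open in the equidimensional scheme $X$, codimension is preserved under restriction, so the perversities $p$, $p^-$, $p^+$ restrict to the corresponding perversities on $U$; and since $j^*$ is exact while a point $x\in U$ computes the same stalk $i_x^*$ and costalk $i_x^!$ whether viewed in $X$ or in $U$, the functor $j^*$ carries ${}^{p^-}\cD(X)^{\le0}$ into ${}^{p^-}\cD(U)^{\le0}$ and ${}^{p^+}\cD(X)^{\ge0}$ into ${}^{p^+}\cD(U)^{\ge0}$, hence $\Mp(X)$ into $\Mp(U)$. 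In particular $j^*$ is $t$-exact for all three perversities, so it commutes with the associated truncation functors.

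For full faithfulness, fix $\F,\cG\in\Mp(X)$ and apply $\Hom(\F,-)$ to the recollement triangle $i_*i^!\cG\to\cG\to Rj_*j^*\cG\xrightarrow{+1}$. By the adjunction $(j^*,Rj_*)$ the middle term computes $\Hom(j^*\F,j^*\cG)$, so it suffices to show $\Hom(\F,i_*i^!\cG[k])=0$ for $k=0,1$. By the adjunction $(i^*,i_*)$ these groups equal $\Hom_{\cD(Z)}(i^*\F,i^!\cG[k])$. The defining inequalities of $\Mp(X)$ now give bounds on $Z$: from $\F\in{}^{p^-}\cD(X)^{\le0}$ the stalk complex $i^*\F$ is co-connective and from $\cG\in{}^{p^+}\cD(X)^{\ge0}$ the costalk complex $i^!\cG$ is connective, with respect to the perverse coherent $t$-structure on $Z$ associated to a perversity $q$ induced by $p$; the shifts by $\pm1$ built into $p^\pm$ are calibrated precisely so that the two objects are separated by a gap of at least two, i.e. $i^*\F\in{}^q\cD(Z)^{\le0}$ while $i^!\cG\in{}^q\cD(Z)^{\ge2}$. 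The required $\Hom$-groups then vanish by the orthogonality half of the $t$-structure axiom for $q$ on $Z$ (\cite[Theorem 3.10]{AB} applied to $Z$), giving $\Hom(\F,\cG)\xrightarrow{\sim}\Hom(j^*\F,j^*\cG)$.

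For essential surjectivity I would construct an explicit extension of a given $\cG_U\in\Mp(U)$. The naive candidate $\tau^{p^{+}}_{\ge0}\tau^{p^{-}}_{\le0}(Rj_*\cG_U)$ does restrict correctly to $\cG_U$ — since $j^*$ commutes with the truncations and $j^*Rj_*=\id$, while $\cG_U$ already lies in both ${}^{p^-}\cD(U)^{\le0}$ and ${}^{p^+}\cD(U)^{\ge0}$ — but it need not lie in $\Mp(X)$: at the generic points of $Z$ one has $p^+-p^-=2$, so neither truncation preserves the condition imposed by the other. The correct extension is the intermediate one. Working in the abelian heart $\M^p(X)$, one forms ${}^p j_*\cG_U:=\pH^0(Rj_*\cG_U)$ together with the dual "$!$-extension" ${}^p j_!\cG_U:=\pH^0(\mathbb{D}_X Rj_*\mathbb{D}_U\cG_U)$, where $\mathbb{D}$ is Grothendieck–Serre duality (interchanging $p$ and $\bar p$), and sets $\F:=\operatorname{im}({}^p j_!\cG_U\to{}^p j_*\cG_U)$. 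One checks that $j^*\F\cong\cG_U$ (duality and the $t$-exactness of $j^*$ reduce this to $j^*\pH^0(Rj_*\cG_U)\cong\cG_U$) and, crucially, that $\F\in\Mp(X)$: a quotient of ${}^p j_!\cG_U$ inherits the $p^-$ stalk bound and a subobject of ${}^p j_*\cG_U$ inherits the $p^+$ costalk bound, which is exactly what the two $\pm1$ shifts encode, one shift going into the sub and the other into the quotient.

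The main obstacle is this last verification — showing the image $\F$ simultaneously satisfies the $p^-$ upper bound and the $p^+$ lower bound on $Z$. As the failure of the naive double truncation illustrates, the difficulty is genuine at the top-dimensional locus of $Z$, and it is resolved exactly as in the classical theory of the intermediate extension: one uses that $\F$ factors the comparison ${}^p j_!\cG_U\to{}^p j_*\cG_U$ and bounds the stalks of $\F$ by those of ${}^p j_!\cG_U$ and its costalks by those of ${}^p j_*\cG_U$. Combined with the full faithfulness established above, this exhibits the intermediate extension as a quasi-inverse to $j^*$ and completes the proof.
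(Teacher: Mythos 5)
Your full-faithfulness half is essentially the correct (and standard) argument, and the key numerical fact you invoke does hold: every $x\in Z$ satisfies $\codim(x)\ge\codim(z)$, so by the two monotonicity conditions defining a perversity both shifts are active on $Z$, i.e.\ $p^-(x)=p(x)-1$ and $p^+(x)=p(x)+1$, which produces exactly the gap of two needed to kill $\Hom(\F,i_*i^!\cG[k])$ for $k=0,1$. One caveat you should flag: $i_*i^!\cG=R\Gamma_Z\cG$ and $Rj_*j^*\cG$ only have quasi-coherent cohomology, so they do not live in $\cD(X)=D^b(\Coh(X))$ where the perverse $t$-structure of \cite{AB} is defined; the orthogonality must be invoked in its pointwise (quasi-coherent) form.

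The genuine gap is in essential surjectivity. Your quasi-inverse $\F=\operatorname{im}({}^pj_!\cG_U\to{}^pj_*\cG_U)$ transplants the BBD construction of $j_{!*}$ from constructible sheaves, and none of its ingredients exist in the coherent setting: there is no extension-by-zero $j_!$ for coherent sheaves at all; $Rj_*\cG_U$ generally fails to have coherent cohomology (the coherence of $j_*$-type extensions is precisely the delicate issue this paper turns on, cf.\ Remark~\ref{jstar}); and Grothendieck--Serre duality only enjoys biduality and the exchange of $p$ with $\bar{p}$ on coherent complexes, so ${}^pj_!:=\pH^0(\mathbb{D}_XRj_*\mathbb{D}_U(-))$ is likewise undefined. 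So neither object whose image you take is known to exist in $\M^p(X)$. Moreover, your stated reason for rejecting the truncation construction is exactly backwards: the inequality $p^+\le p^-+2$, which holds everywhere by the definitions of $p^\pm$, is what makes the double truncation \emph{work}, not what breaks it. The proof this paper relies on (the statement is quoted from \cite{AS2}, which follows Bezrukavnikov's Deligne-style construction) proves surjectivity as follows: choose \emph{any} coherent extension $\cG_1\in\cD(X)$ of $\cG_U$ (possible because $\cD(U)$ is the Verdier quotient of $\cD(X)$ by complexes supported on $Z$), and set $\F=\tau^{p^+}_{\ge 0}\tau^{p^-}_{\le 0}\cG_1$. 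Then $j^*\F\cong\cG_U$ by $t$-exactness of $j^*$, $\F\in{}^{p^+}\cD(X)^{\ge 0}$ by construction, and $\F$ remains in ${}^{p^-}\cD(X)^{\le 0}$: writing $A=\tau^{p^+}_{\le -1}\tau^{p^-}_{\le 0}\cG_1$, the stalk long exact sequence of the triangle $A\to\tau^{p^-}_{\le 0}\cG_1\to\F\to A[1]$ shows that for $k>p^-(x)$ the group $H^k(i_x^*\F)$ is trapped between $H^k(i_x^*\tau^{p^-}_{\le 0}\cG_1)=0$ and $H^{k+1}(i_x^*A)$, and the latter vanishes because $k+1\ge p^-(x)+2\ge p^+(x)$ while $A\in{}^{p^+}\cD(X)^{\le -1}$. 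This argument never leaves the coherent world and needs no $j_!$, no $Rj_*$, and no duality; combined with your full-faithfulness argument it completes the proof.
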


\begin{definition}\label{IC}
The intermediate extension functor 
$\cIC^p(X, \cdot) : \Mp(U) \to \Mp (X)$ is defined as the inverse
equivalence to that of Proposition~\ref{equiv}.  
\end{definition}

 \begin{rmk}\label{codim}
   By \cite[Remark 2.7]{AS2}, if $U$ is irreducible, then the category
   $\Mp (U)$ reduces to the zero object whenever $\codim(Z) \le
   \codim(U) +1$.
 \end{rmk}

From now on, unless otherwise mentioned, we will assume that $U$ is an
open dense subset of $X$ and that $\ccodim Z\ge 2$.  These conditions
are needed for many of the results we use from \cite{AS2}.  Moreover,
we will primarily consider \emph{standard} perversities on $X$.

\begin{definition}\label{standard}
We say that a perversity $p$ on $X$ is \emph{standard} if
\begin{equation*}
p(x) = p^{-} (x)  = p^{+}(x) = 0 \quad \text{if} \quad \codim (x) = 0.
\end{equation*}
\end{definition}
There are unique maximal and minimal standard perversities on $X$
defined by:
\begin{equation*}
\begin{aligned}
s (x) = \left\{ 
\begin{array}{ll}
0, \\ 1,
\end{array}
\right.
& \quad
c (x) = \left\{ 
\begin{array}{ll}
\codim(x) &\text{if} \, \codim(x) < \ccodim (Z), \\ 
\codim(x) -1 &\text{if} \, \codim(x) \ge \ccodim (Z).
\end{array}
\right.
\end{aligned}
\end{equation*}
\begin{lemma}[{\cite[Lemma 3.3]{AS2}}]
Every standard perversity $p$ satisfies $s(x) \le p(x) \le c(x)$ for all $x \in X$.
\end{lemma}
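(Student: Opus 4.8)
The plan is to exploit the fact, recorded parenthetically in the definition of perversity, that $p(x)$ depends only on $\codim(x)$; so I may treat $p$ as a nondecreasing integer-valued function of the single variable $d=\codim(x)$ for which $d\mapsto d-p$ is also nondecreasing (these are precisely the two displayed perversity inequalities). Throughout I fix the point $z$ with $\codim(z)=\ccodim(Z)$, so that $p(z)$ is the common value of $p$ in codimension $\ccodim(Z)$. The first and most important step is to translate the standard hypothesis into numerical bounds on $p(z)$. Evaluating at a point $x_0$ of codimension $0$, where $p(x_0)=0$, the requirement $p^-(x_0)=0$ rules out the branch $p^-(x_0)=p(x_0)-1=-1$, which forces the inequality $p(x_0)<p(z)$, i.e.\ $p(z)\ge 1$; likewise the requirement $p^+(x_0)=0$ rules out the branch $p^+(x_0)=p(x_0)+1=1$, which forces $\codim(x_0)-p(x_0)<\codim(z)-p(z)$, i.e.\ $p(z)\le \ccodim(Z)-1$. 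Thus a standard perversity satisfies $p(x_0)=0$ in codimension $0$ together with $1\le p(z)\le \ccodim(Z)-1$.

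With these bounds in hand the lower inequality $s(x)\le p(x)$ follows from the first monotonicity property alone. When $\codim(x)<\ccodim(Z)$ one has $s(x)=0$, and $p(x)\ge p(x_0)=0$ since $\codim(x)\ge\codim(x_0)$. When $\codim(x)\ge\ccodim(Z)$ one has $s(x)=1$, and monotonicity gives $p(x)\ge p(z)\ge 1$ by the lower bound on $p(z)$ just established. This disposes of both cases of the minimal perversity.

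The upper inequality $p(x)\le c(x)$ uses the second monotonicity property, namely that $d-p$ is nondecreasing. Since $\codim(x_0)-p(x_0)=0$, this gives $\codim(x)-p(x)\ge 0$, i.e.\ $p(x)\le\codim(x)$, which is exactly $c(x)$ when $\codim(x)<\ccodim(Z)$. When $\codim(x)\ge\ccodim(Z)$, applying the same monotonicity to the pair $(\codim(x),\codim(z))$ yields $\codim(x)-p(x)\ge\codim(z)-p(z)=\ccodim(Z)-p(z)\ge 1$ by the upper bound on $p(z)$, whence $p(x)\le\codim(x)-1=c(x)$. Both cases of the maximal perversity are then covered.

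The only genuinely delicate point is the translation carried out in the first paragraph: one must read off from the piecewise definitions of $p^-$ and $p^+$ which branch is selected at a codimension-$0$ point and verify that demanding the value $0$ in \emph{each} of them pins down a two-sided bound $1\le p(z)\le \ccodim(Z)-1$, rather than just a single inequality. Once those bounds are secured, both halves of the lemma drop out formally from the two monotonicity axioms together with the normalization $p=0$ in codimension $0$, with no further computation required.
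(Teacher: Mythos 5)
Your proof is correct. Note that the paper itself offers no argument for this lemma---it is imported by citation from \cite[Lemma 3.3]{AS2}---so there is no in-paper proof to compare against; your write-up is a valid self-contained verification. The two key moves are sound: (i) since $p(x_0)=0$ at a codimension-$0$ point $x_0$, the requirements $p^-(x_0)=0$ and $p^+(x_0)=0$ force the ``else'' branches of the piecewise definitions, which is exactly the two-sided bound $1\le p(z)\le \ccodim(Z)-1$; and (ii) the two monotonicity axioms, applied once against $x_0$ and once against $z$, then give $s\le p$ and $p\le c$ in both codimension ranges. You also correctly supplied the (typographically garbled) case division for $s$, namely $s(x)=0$ for $\codim(x)<\ccodim(Z)$ and $s(x)=1$ otherwise, which is the reading consistent with \cite{AS2} and with the displayed definition of $c$.
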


\begin{rmk}\label{neg}
If $p$ is any standard perversity, then $p^-\ge 0$.  It follows that
any coherent sheaf $\cF$ is contained in ${}^{p^-}\cD(X)^{\le 0}$.
\end{rmk}

\begin{rmk}\label{jstar} The complex $\cIC^s (X, \cF)$ is automatically a sheaf if
  it is defined.  Indeed, it is just $j_* \cF$ by \cite[Proposition
  3.7]{AS2}.
\end{rmk}
\section{Generalized Serre Conditions}\label{section3}

\subsection{The conditions $S_\rho$}\label{defns}
In this section, we introduce a class of local cohomology conditions
which generalize Serre's conditions $S_r$ and the Cohen-Macaulay
condition.  We also show their connection to perversity functions.

Let $W'$ be the set of weakly increasing functions $\rho:\Z_{\ge 0}\to
\Z_{\ge 0}$ such that $r(0)=0$ and $\rho(k+1)-\rho(k)\le 1$ for all
$k$.  Note that $W'$ is a lattice with respect to the usual partial
order with the identity (resp. the zero function) as the maximum
(resp. minimum).  If we set $\rho_r(k)=\min(k,r)$, then $\{\rho_r\}$
is an increasing sequence whose supremum is $\id$. Let $W=\{\rho\in
W'\mid \rho\ge\rho_2\}$.

\begin{definition} Given $\rho\in W'$, we say that $\cF\in\Coh(X)$ is
  $S_\rho$ at $x$ if $H^k(i^!_x(\cF))=\{0\}$ for
  $k<\rho(\dim_x(\cF))$; $\cF$ is $S_\rho$ if it is $S_\rho$ at $x$
  for all $x\in X$.
\end{definition}

Note this says that $\cF$ is $S_\rho$ at $x$ if and only if
$\depth_x(\cF)\ge \rho(\dim_x(\cF))$ for all $x\in X$.  In particular,
$S_{\rho_r}$ is the usual condition $S_r$ while a sheaf is $S_{\id}$
if and only if it is Cohen-Macaulay.  Elements of $W$ correspond to
local cohomology conditions which are at least as strong as $S_2$.

To see the relationship between these conditions and perversities, we
need a numerical version of perversity functions.

\begin{definition} For $n\ge 2$, a \emph{standard numerical perversity
    of level n} is a function $\pi:\Z_{\ge 0}\to \Z_{\ge 0}$
  such that $\pi(0)=0$, $0<\pi(n)<n$, and both $\pi$ and its dual
  $\bpi\overset{\text{def}}{=}\id-\pi$ are nondecreasing.  The set of
  all such functions is denoted by $P_n$; these satisfy $P_n\subset
  P_m$ if $n\le m$.  A standard numerical perversity is an element of
  $P=\cup_{n\ge 2} P_n$.
\end{definition}
Note that $0\le \pi(k+1)-\pi(k)\le 1$ for any $\pi\in P$.

Given $(X,U)$, any element $\pi\in P_{\ccodim Z}$ induces a standard
perversity $p_\pi=\pi\circ\codim:X\to\Z_{\ge 0}$.  Conversely, any
perversity comes from a (non-unique) element of
$P_{\ccodim Z}$.

For $\pi\in P_n$, we set \begin{equation*}\pi^+_n(k) =
\begin{cases}
p(k) + 1 & \text{if $k - p(k) \ge n-p(n)$,} \\
p(k) & \text{if $k-p(k) < n-p(n)$.}
\end{cases}
\end{equation*}
We will suppress $n$ from the notation when it is unambiguous.  In
particular, if we are considering a pair $(X,U)$, then we will always
take $n=\ccodim(Z)$.  With this convention, we see that
$p_{\pi^+}=(p_\pi)^+$.

Given $\rho:\Z_{\ge 0}\to \Z_{\ge 0}$ such that $r(0)=0$ and $n\ge 2$,
let $P_n(\rho)$ be the set of standard numerical perversities $\pi\in P_n$ such
that $\pi^+(k)\le\rho(k)$ for all $x$ with equality if $k\ge n$.  Given
$\rho\in W$ and $n\ge 2$, let $m$ be the largest index such that
$\rho(m)<\rho(n)$.  We then define $\hpi_{\rho,n},\pi_{\rho,n}\in P_n$ via
\begin{align*}
\hpi_{\rho,n}(k) &=
\begin{cases} \rho(k),& \text{ if $k\le m$},\\ \rho(k)-1,& \text{ if $k\ge m+1$,}
\end{cases}
&\pi_{\rho,n}(k)=
\begin{cases} \max(k-(n-\rho(n)+1),0),& \text{ if $k<n$,}\\
\rho(k)-1, & \text{if $k\ge n$.}
\end{cases}
\end{align*}
It is easy to check that these are indeed standard numerical
perversities in $P_n(\rho)$.  We also define a function $\phi:P_2\to W$:
\begin{equation*} \phi(\pi)(k)=\begin{cases} k, &\text{if $k\le 1$,}\\
\pi(k)+1, &\text{if $k\ge 2$.}
\end{cases}
\end{equation*}

\begin{proposition}\label{p3.3}  \mbox{}  \begin{enumerate}  \item Let $\rho:\Z_{\ge 0}\to \Z_{\ge 0}$ be a function such
    that $\rho(k)=k$ for $k\le 1$.  The set $P_2(\rho)$ is nonempty if
    and only if $\rho\in W$.
\item 
For any $\rho\in W$ and $n\ge 2$, $\pi_{\rho,n}$ (resp. $\hpi_{\rho,n}$) is
the unique minimum (resp. maximum) element of $P_n(\rho)$.
\item There exists $\pi\in P_n(\rho)$ such that $\pi^+_n=\rho$ if and only
  if $\rho(n-1)<\rho(n)$.
\item The function $\phi:P_2\to W$ is surjective and two-to-one with
  $\phi^{-1}(\rho)=P_2(\rho)=\{\pi_{\rho,2},\hpi_{\rho,2}\}$.  Moreover, the
  duality map on numerical perversities induces a duality map
  $\rho \mapsto \br$ on $W$ such that $\phi(\pi_{\rho,2})=\hpi_{\br,2}$ and $\phi(\hpi_{\rho,2})=\pi_{\br,2}$.
\end{enumerate}
\end{proposition}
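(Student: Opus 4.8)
The plan is to translate everything into monotonicity statements about the dual function $\bpi=\id-\pi$, which turns the definition of $\pi^{+}_{n}$ into a clean threshold. First I would record the dictionary. Since $\bpi$ is nondecreasing for $\pi\in P_n$, the inequality $k-\pi(k)\ge n-\pi(n)$ cutting out the first branch of $\pi^{+}_{n}$ is exactly $\bpi(k)\ge\bpi(n)$, and the set of such $k$ is an up-set $\{k\ge k_0\}$ with $k_0\le n$, where $k_0$ is the least index at which $\bpi$ attains the value $\bpi(n)$. Thus $\pi^{+}_{n}=\pi+1$ on $[k_0,\infty)$ and $\pi^{+}_{n}=\pi$ below $k_0$, so $\pi\le\pi^{+}_{n}\le\pi+1$ pointwise. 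For any $\pi\in P_n(\rho)$ this yields two facts I will use repeatedly: (i) $\pi\le\rho$ everywhere, from $\pi\le\pi^{+}_{n}\le\rho$; and (ii) since $n\ge k_0$, the equality $\pi^{+}_{n}(k)=\rho(k)$ for $k\ge n$ forces $\pi(k)=\rho(k)-1$ there, whence $\bpi(n)=n-\rho(n)+1$.

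For part (2) I would prove the two extremality statements as pointwise bounds. For the minimum, fix $\pi\in P_n(\rho)$. Nonnegativity gives $\pi(k)\ge0$, while for $k\le n$ monotonicity of $\bpi$ gives $\bpi(k)\le\bpi(n)=n-\rho(n)+1$, i.e.\ $\pi(k)\ge k-(n-\rho(n)+1)$; combining these reproduces $\pi_{\rho,n}(k)$ for $k<n$, and (ii) handles $k\ge n$. For the maximum I combine (i) with monotonicity of $\pi$: for $k\le n$ one has both $\pi(k)\le\rho(k)$ and $\pi(k)\le\pi(n)=\rho(n)-1$. Writing $m$ for the largest index with $\rho(m)<\rho(n)$, one checks $\rho$ is constant equal to $\rho(n)$ on $[m+1,n]$, so $\min(\rho(k),\rho(n)-1)$ equals $\rho(k)$ for $k\le m$ and $\rho(k)-1$ for $m+1\le k\le n$, which is exactly $\hpi_{\rho,n}$; again (ii) covers $k\ge n$. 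Since the paper already notes $\pi_{\rho,n},\hpi_{\rho,n}\in P_n(\rho)$, these bounds are attained, giving the unique minimum and maximum. I expect this maximum bound to be the main obstacle: the active branch of $\pi^{+}_{n}$ depends on $\pi$ through the threshold $k_0$, and the bound $\pi\le\hpi_{\rho,n}$ only emerges after fusing the two a priori unrelated inequalities $\pi\le\rho$ and $\pi\le\pi(n)$ with the plateau of $\rho$ on $[m+1,n]$.

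Parts (1) and (4) then follow by specializing to $n=2$ after one computation: for every $\pi\in P_2$ one has $\pi(2)=1$, hence $\bpi(2)=1$, and the threshold description shows $\pi^{+}_{2}=\phi(\pi)$ (they agree with $\pi$ below $k_0$, with $\pi+1$ from $k=2$ on, and match directly at $k\le1$), while a direct check gives $\phi(\pi)\in W$. For (1): if $\rho\in W$ then $\pi_{\rho,2}\in P_2(\rho)$ by part (2), so $P_2(\rho)\ne\varnothing$; conversely, if $\pi\in P_2(\rho)$ then $\rho=\pi^{+}_{2}=\phi(\pi)\in W$, using the hypothesis $\rho(k)=k$ for $k\le1$ to upgrade ``$\le$'' to equality at $k\le1$. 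For (4): since $\phi(\pi)=\pi^{+}_{2}$ we get $\phi^{-1}(\rho)=P_2(\rho)$, and solving $\phi(\pi)=\rho$ forces $\pi(k)=\rho(k)-1$ for $k\ge2$ and $\pi(0)=0$, leaving $\pi(1)\in\{0,1\}$ as the only freedom; both choices lie in $P_2$ and are precisely $\pi_{\rho,2}$ and $\hpi_{\rho,2}$, so $\phi$ is surjective and two-to-one.

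For part (3) I would test $\hpi_{\rho,n}$. When $\rho(n-1)<\rho(n)$ we have $m=n-1$, and a threshold computation (using slope $\le1$, so $\rho(n)-\rho(n-1)\le1$) shows that the dual of $\hpi_{\rho,n}$ first reaches $n-\rho(n)+1$ exactly at $k=n$, whence $(\hpi_{\rho,n})^{+}_{n}=\rho$ on all of $\Z_{\ge0}$. Conversely, if $\pi\in P_n(\rho)$ satisfies $\pi^{+}_{n}=\rho$ but $\rho(n-1)=\rho(n)$, then (ii) gives $\pi(n)=\rho(n)-1$, and analyzing $\pi^{+}_{n}(n-1)=\rho(n)$ against the threshold forces $\pi(n-1)=\rho(n)$, contradicting $\pi(n-1)\le\pi(n)$. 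Finally, for the duality in (4), numerical duality $\pi\mapsto\bpi$ is an involution of $P_2$ carrying each $\phi$-fiber onto a $\phi$-fiber (it alters only the value at $k=1$ within a fiber and transforms the shared values at $k\ge2$ uniformly), so it descends to an involution $\rho\mapsto\br$ of $W$, explicitly $\br(k)=k+2-\rho(k)$ for $k\ge2$ with $\overline{\rho_2}=\id$ and $\overline{\id}=\rho_2$; comparing the values at $k=1$ then identifies $\overline{\pi_{\rho,2}}=\hpi_{\br,2}$ and $\overline{\hpi_{\rho,2}}=\pi_{\br,2}$.
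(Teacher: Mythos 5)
Your proof is correct and follows essentially the same route as the paper's: the same extremality arguments for (2) (via $\pi(n)=\rho(n)-1$ and monotonicity of $\pi$ and $\bpi$), the same test of $\hpi_{\rho,n}$ for (3), and the same fiber description $\phi^{-1}(\rho)=P_2(\rho)$ for (1) and (4). Your threshold dictionary for $\pi^+_n$ and the explicit identity $\pi^+_2=\phi(\pi)$ merely systematize steps the paper calls ``immediate,'' and your direct converse in (3) sidesteps the paper's (unproved but true) appeal to monotonicity of $\pi\mapsto\pi^+$ --- refinements of detail, not a different argument.
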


\begin{proof} If $\pi\in P_2(\rho)$, then $\pi(k)=\rho(k)-1$ for $k\ge 2$.
  Also, $\pi(2)=1$, so $\rho(2)=2$.  This implies that $\rho$ is
  nondecreasing and $\rho\ge \rho_2$.  Moreover, since $\pi(k+1)-\pi(k)\le
  1$, the same holds for $\rho$.  Thus, $\rho\in W$.  The converse holds,
  since $\pi_{\rho,2}\in P_2(\rho)$ for $\rho\in W$.

  For the second part, note that if $\pi\in P_n(\rho)$, then
  $\pi(n)=\rho(n)-1$.  If $\pi(k)<k-(n-\rho(n)+1)$ for $n-\rho(n)+1\le k <
  n$, then $\pi(n)<\rho(n)-1$, a contradiction, so $\pi\ge \pi_{\rho,n}$.
  On the other hand, $\pi\le \rho$, so if $\pi\nleq \hpi_{\rho,n}$, there
  exists $k$ such that $m<k<n$ such that $\pi(k)=\rho(k)=\rho(n)$.
  Since $\pi$ is nondecreasing, we obtain $\pi(n)\ge r(n)$, which is
  again a contradiction.  

  Note that $\hpi_{\rho,n}^+(k)=\rho(k)$ except for $m<k<n$, so
  $\hpi_{\rho,n}^+(k)=\rho$ precisely when $\rho(n-1)<\rho(n)$.  Since $\pi\le
  \pi'$ implies $\pi^+\le \pi'^+$, the third statement follows.

  For $\pi\in P_2$, it is immediate that $\pi\in P_2(\rho)$ if and only
  if $\phi(\pi)=\rho$. In addition, $\pi\in P_2(\rho)$ is uniquely
  determined by $\pi(1)\in\{0,1\}$, so $\pi_{\rho,2}$ and $\hpi_{\rho,2}$
  are the only elements of $\phi^{-1}(\rho)=P_2(\rho)$.  The final
  statement about  duality is obvious.
\end{proof}

\begin{definition}\label{Srdef} Given $\rho\in W$, we say that $S_{\br}$ is the generalized
  Serre condition dual to $S_\rho$.
\end{definition}
\begin{example} The Serre condition dual to $S_r$ is given by
  $S_{\br_r}$, where 
\begin{equation*} \br_r(k)=\begin{cases} k,& \text{if $k\le 1$;}\\
2,& \text{if $2\le k\le r$;}\\
k-(r-2), &\text{if $k>r$.}
\end{cases}
\end{equation*}
In particular, $S_2$ and Cohen-Macaulay condition are dual to each
other.
\end{example}

\begin{proposition}\label{ICdefined} Let $\pi\in P_{\ccodim Z}(\rho)$, and suppose that
  $\cF\in\Coh(U)$.
\begin{enumerate}
\item The complex $\cIC^{p_\pi}(X,\cF)$ is defined if and only if
  $\cF\in {}^{p^+_\pi}\cD(U)^{\ge 0}$.  In particular, this is true if
  $\Supp(\cF)=U$ and $\cF$ is $S_\rho$.
\item If $\cIC^{p_\pi}(X,\cF)$ is a sheaf, it is $S_\rho$ at all points not in $U$.
\end{enumerate}
\end{proposition}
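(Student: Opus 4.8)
The plan is to reduce both statements to the definition of the domain $\Mp(U)$, to the translation between depth and the perverse $t$-structure (both expressed through $i^!_x$), and to the numerical inequalities built into $\pi\in P_{\ccodim Z}(\rho)$, namely $\pi^+\le\rho$ everywhere with $\pi^+=\rho$ for $k\ge n=\ccodim(Z)$.

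For part (1), I would begin from the fact that $\cIC^{p_\pi}(X,\cF)$ is defined exactly when $\cF$ lies in the domain $\Mp(U)={}^{p^-_\pi}\cD(U)^{\le 0}\cap{}^{p^+_\pi}\cD(U)^{\ge 0}$. Since $\cF$ is a coherent sheaf and $i^*_x$ is exact, $i^*_x(\cF)$ is concentrated in degree $0$; because $p^-_\pi\ge 0$ by Remark~\ref{neg}, the reasoning there shows $\cF\in{}^{p^-_\pi}\cD(U)^{\le 0}$ automatically. This collapses the defining condition to $\cF\in{}^{p^+_\pi}\cD(U)^{\ge 0}$, which is the asserted equivalence. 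For the ``in particular,'' I would use that for $x\in U$ one has $\Osh_{U,x}=\Osh_{X,x}$, so $\Supp\cF=U$ forces $\dim_x(\cF)=\dim\Osh_{X,x}=\codim(x)$; being $S_\rho$ then yields $H^k(i^!_x(\cF))=0$ for $k<\rho(\codim x)$, and since $p^+_\pi(x)=\pi^+(\codim x)\le\rho(\codim x)$ this gives the required vanishing for $k<p^+_\pi(x)$ at every $x\in U$.

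For part (2), write $\cG=\cIC^{p_\pi}(X,\cF)$. As $\cG\in\Mp(X)\subset{}^{p^+_\pi}\cD(X)^{\ge 0}$, one has $H^k(i^!_x(\cG))=0$ whenever $k<p^+_\pi(x)=\pi^+(\codim x)$. I would then fix $x\in Z$: since $\codim(x)\ge\ccodim(Z)=n$ and $\pi^+=\rho$ on $[n,\infty)$, this reads $p^+_\pi(x)=\rho(\codim x)$. Because $\Supp\cG\subseteq X$ gives $\dim_x(\cG)=\dim\Osh_{\Supp\cG,x}\le\dim\Osh_{X,x}=\codim(x)$ and $\rho$ is nondecreasing, we get $\rho(\dim_x\cG)\le\rho(\codim x)=p^+_\pi(x)$. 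Hence the vanishing above holds for all $k<\rho(\dim_x\cG)$, which is precisely the statement that the sheaf $\cG$ is $S_\rho$ at $x$.

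The genuinely routine inputs are the two reformulations through $i^!_x$ and the inequalities packaged into $P_n(\rho)$. The subtle point I would be most careful about is the interplay of the three size functions $\dim_x(-)$, $\codim(x)$, and $\ccodim(Z)$: the condition $S_\rho$ is phrased via the support dimension $\dim_x$, whereas the $t$-structure bound $p^+_\pi$ is phrased via $\codim$. Matching them requires both $\codim(x)\ge\ccodim(Z)$ for $x\in Z$ and the comparison $\dim_x(-)\le\codim(x)$ (with equality when the support is all of $U$), and it is the standing equidimensionality hypothesis that makes these comparisons behave as needed; without it, comparing $\rho(\dim_x)$ with $\rho(\codim x)$—and hence the whole argument—could fail.
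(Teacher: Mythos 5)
Your proof is correct and follows essentially the same route as the paper's: Remark~\ref{neg} disposes of the ${}^{p^-_\pi}\cD(U)^{\le 0}$ condition, $\Supp\cF=U$ gives $\dim_u\cF=\codim u$ together with $\pi^+\le\rho$ for the ``in particular,'' and for part (2) the containment $\cIC^{p_\pi}(X,\cF)\in{}^{p^+_\pi}\cD(X)^{\ge 0}$ combined with $\pi^+(k)=\rho(k)$ for $k\ge\ccodim Z$ and the bound $\dim_x(-)\le\codim(x)$ yields $S_\rho$ at points of $Z$. The only difference is that you spell out the monotonicity of $\rho$ and the comparison of $\dim_x$ with $\codim(x)$, which the paper leaves implicit.
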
  

\begin{proof} Since $\cF$ is a sheaf and $p_\pi$ is standard, $\cF$
  lies in ${}^{p^-_\pi}\cD(U)^{\le 0}$ by Remark~\ref{neg}.  Hence,
  $\cIC^{p_\pi}(X,\cF)$ is defined if and only if $\cF\in
  {}^{p^+_\pi}\cD(U)^{\ge 0}$.  If $\Supp(\cF)=U$, then
  $\dim_u\cF=\codim u$ for all $u\in U$, so $\cF\in
  {}^{p^+_\pi}\cD(U)^{\ge 0}$.  Finally, if $x\in Z$, then
  $H^k(i^!_x(\cIC^{p_\pi}(X,\cF))=0$ for all
  $k<p^+_\pi(x)=\rho(\codim(x))$.  If $\cIC^{p_\pi}(X,\cF)$ is a
  sheaf, then it is $S_\rho$ at $x$ because the dimension of a
  coherent sheaf at $x$ is at most $\codim(x)$.
\end{proof}

\begin{definition}\label{prho} The $S_\rho$-perversity on $(X,U)$ is the standard
  perversity $p_\rho(x)=\pi_{\rho,\ccodim Z}(\codim (x))$.
\end{definition}
Note that this perversity gives the least restrictive conditions on
$\cF\in\Coh(U)$ guaranteeing that $\cIC(X,\cF)$ is $S_\rho$ for all
points in $Z$ (if it is a sheaf).  In our previous notation, $s$ is
the $S_2$-perversity.  However, $c$ is not the $S_\id$ (i.e., the
Cohen-Macaulay) perversity, since it corresponds to $\hpi_{\id,\ccodim
  Z}$.

We end this section with an explicit description of what it means for
a sheaf to be in ${}^{p^+_\rho}\cD(X)^{\ge 0}$.
\begin{proposition} Let $n=\ccodim Z$.  A sheaf $\cF$ is in
  ${}^{p^+_\rho}\cD(X)^{\ge 0}$ if and only if
\begin{equation}\label{prhoformula} \depth_x\cF\ge\begin{cases} \rho(\codim (x)),&
    \text{if $\codim (x)\ge n$;}\\
\rho(n)-(n-\codim(x)),& \text{if $n-\rho(n)+1\le \codim(x)<n$;}\\
0, &\text{if $\codim(x)\le n-\rho(n)$.}
\end{cases}
\end{equation}
\end{proposition}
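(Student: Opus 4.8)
The plan is to reduce the statement to an explicit evaluation of the perversity $\pi^+_{\rho,n}$ and then compute it branch by branch. First I would unwind the definition of the $\ge 0$ half of the perverse $t$-structure: a complex lies in ${}^{p^+_\rho}\cD(X)^{\ge 0}$ exactly when $H^k(i^!_x\cF)=0$ for every $x\in X$ and every $k<p^+_\rho(x)$. For a coherent sheaf $\cF$ the quantity $\depth_x\cF$ is by definition the least degree in which $i^!_x\cF$ has nonzero cohomology (with $i^!_x\cF=0$, hence $\depth_x\cF=+\infty$, when $x\notin\Supp\cF$), and all negative-degree local cohomology vanishes; so this membership condition is equivalent to $\depth_x\cF\ge p^+_\rho(x)$ for all $x$. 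It therefore suffices to identify $p^+_\rho(x)$ with the right-hand side of \eqref{prhoformula}.

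Next I would rewrite $p^+_\rho$ numerically. By Definition~\ref{prho} we have $p_\rho=p_{\pi_{\rho,n}}$ with $n=\ccodim Z$, and since $p_{\pi^+}=(p_\pi)^+$ this gives $p^+_\rho(x)=\pi^+_{\rho,n}(\codim x)$. Writing $\pi=\pi_{\rho,n}$ and $d=n-\rho(n)+1$, note that $\pi(n)=\rho(n)-1$, so the threshold appearing in the definition of $\pi^+_n$ is $n-\pi(n)=d$. The one structural fact I would invoke is that, because $\rho\in W$ satisfies $\rho(k+1)-\rho(k)\le 1$, the function $k\mapsto k-\rho(k)$ is nondecreasing; this controls in each branch whether the test $k-\pi(k)\ge n-\pi(n)$ holds and hence whether $\pi^+$ adds one to $\pi$.

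The computation then splits into the three ranges of $k=\codim x$. For $k\ge n$ we have $\pi(k)=\rho(k)-1$, so $k-\pi(k)=k-\rho(k)+1\ge n-\rho(n)+1=d$ by monotonicity; the test holds and $\pi^+(k)=\pi(k)+1=\rho(k)$. For $d\le k<n$ we have $\pi(k)=k-d$, whence $k-\pi(k)=d$, the test holds with equality, and $\pi^+(k)=(k-d)+1=\rho(n)-(n-k)$. Finally, for $k\le n-\rho(n)=d-1$ we have $\pi(k)=0$, so $k-\pi(k)=k<d$, the test fails, and $\pi^+(k)=0$. These three values are exactly the three cases of \eqref{prhoformula}, completing the argument. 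I expect the only delicate point to be the boundary bookkeeping around $k=n$ and $k=d$, in particular correctly evaluating $n-\pi(n)$ and invoking the monotonicity of $k-\rho(k)$ to rule out a spurious $+1$ in the lowest range.
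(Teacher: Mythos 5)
Your proof is correct and is precisely the ``simple calculation using the definition of $p^+_\rho$'' that the paper itself invokes (the paper omits the details entirely); your branch-by-branch evaluation of $\pi^+_{\rho,n}$, including the use of monotonicity of $k\mapsto k-\rho(k)$ and the identification $n-\pi_{\rho,n}(n)=n-\rho(n)+1$, fills in that calculation correctly. Your computation also confirms the paper's remark that the first two cases are exactly those points where $p^+_\rho(x)=p_\rho(x)+1$.
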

The proof is a simple calculation using the definition of $p^+_\rho$.
Note that the first two cases correspond to points for which
$p_\rho^+(x)=p_\rho^+(x)+1$.

\subsection{Example}

In this section, we will construct an $S_\rho$ scheme $X$ that is
``strictly $S_\rho$ through codimension $n$''.  This example is
adapted from \cite[Section 4]{Gr}.

Let $K$ be an algebraically closed field of characteristic zero, and
let $K[X_1, \ldots, X_{a}, Z]$ be a polynomial ring in $a+1$
variables.  We define a graded ring $T^a$ by $T^a = K[X_1, \ldots,
X_{a}, Z] / (f)$, where $(f)$ is the ideal generated by $f = Z^{a+1} -
\sum_{i = 1}^{a} X^{a+1}_i$.  Let $A^b$ be the polynomial ring
$K[Y_0, Y_1, \ldots, Y_b]$.  We let $X_{a,b}=\Spec(T^a\times_K A^b)$,
where $T^a\times_K A^b$ denotes the Segre product.  (Recall that this
is the subalgebra of $T^a\otimes_K A^b$ generated by elements
$r\otimes s$ with $r$ and $s$ homogeneous of the same degree.)
Griffith showed in \cite[Theorem 4.5]{Gr} that this variety has the following
properties:
\begin{lemma}\label{Gr} \mbox{}
\begin{enumerate}
\item The variety $X_{a,b}$ is $S_a$, but not $S_{a+1}$.
\item The non-smooth locus of $X_{a,b}$ is the singleton point
  corresponding to the irrelevant maximal ideal.
\end{enumerate}
\end{lemma}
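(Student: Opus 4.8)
The plan is to recognize $X_{a,b}$ as the affine cone over a smooth projective variety and to extract both assertions from the local cohomology at the cone point together with the smoothness of the underlying projective variety. First I would set $V=\Proj(T^a)\subset\bP^a$ and $W=\Proj(A^b)=\bP^b$. Since $\tchar K=0$, the partial derivatives $(a+1)Z^a$ and $-(a+1)X_i^a$ of $f$ have no common projective zero, so $V$ is a smooth hypersurface of degree $a+1$ and dimension $a-1$; by adjunction $\omega_V\cong\cO_V(a+1-(a+1))=\cO_V$. The Segre product $R=T^a\times_K A^b$ is standard graded with $\Proj(R)=V\times W$, polarized by $\mathcal{L}=\cO_V(1)\boxtimes\cO_W(1)$, so $X_{a,b}=\Spec(R)$ is the affine cone over $V\times W$ and $\dim R=(a-1)+b+1=a+b$. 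Throughout I assume $a\ge 2$ and $b\ge 1$, the only non-degenerate range.

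Next I would compute the local cohomology at the vertex $\fm=R_+$ via the standard cone dictionary: $H^i_\fm(R)\cong\bigoplus_{d\in\Z}H^{i-1}(V\times W,\mathcal{L}^d)$ for $i\ge 2$, with $H^0_\fm(R)=0$ and $H^1_\fm(R)=0$ (the latter because $T^a$ and $A^b$ are projectively normal, so $R_d=T^a_d\otimes A^b_d=H^0(V\times W,\mathcal{L}^d)$ by Künneth). The groups $H^q(V\times W,\mathcal{L}^d)$ then factor by Künneth through the line-bundle cohomology of the two factors. On $W=\bP^b$ only $H^0$ ($d\ge 0$) and $H^b$ ($d\le -b-1$) survive. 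On $V$ the twisted sequence $0\to\cO_{\bP^a}(d-a-1)\to\cO_{\bP^a}(d)\to\cO_V(d)\to 0$, together with Serre duality and $\omega_V\cong\cO_V$, shows that the only nonzero groups are $H^0(V,\cO_V(d))$ ($d\ge 0$) and $H^{a-1}(V,\cO_V(d))$ ($d\le 0$), the intermediate cohomology vanishing. Assembling these, $H^q(V\times W,\mathcal{L}^d)$ is nonzero only for $q\in\{0,\,a-1,\,a-1+b\}$; hence $H^i_\fm(R)\ne 0$ exactly at $i=a$ (where the $d=0$ term contributes $H^{a-1}(V,\cO_V)\otimes H^0(W,\cO_W)\cong K$) and at $i=a+b$. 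Therefore $\depth_\fm(R)=a$.

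From this I deduce both parts. The punctured cone $\Spec(R)\setminus\{\fm\}$ is a $\mathbb{G}_m$-bundle over the smooth variety $V\times W$, hence smooth, while $R$ fails to be regular at $\fm$ since $\depth_\fm R=a<a+b=\dim R$; this gives part (2), that the non-smooth locus is precisely the vertex $\fm$. For part (1), every prime other than $\fm$ is regular, so the Serre condition $S_r$ for $\cO_{X_{a,b}}$ reduces to the single inequality $\depth_\fm R\ge\min(r,\dim R)$, i.e. $a\ge\min(r,a+b)$. This holds for $r=a$ but fails for $r=a+1$ (as $a+1\le a+b$), so $X_{a,b}$ is $S_a$ but not $S_{a+1}$.

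The main obstacle is the line-bundle cohomology of $V$: establishing the vanishing of the intermediate cohomology of $\cO_V(d)$ and isolating the single surviving class in degree $d=0$ is exactly what forces $\depth_\fm R$ to equal $a$ rather than something larger, and the bookkeeping through Künneth and the cone-to-local-cohomology correspondence (including the $i=0,1$ corrections) must be carried out carefully. By contrast, part (2) is essentially formal once smoothness of $V\times W$ and non-regularity at the vertex are in hand.
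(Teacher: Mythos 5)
Your proof is correct, but note that the paper itself offers no argument for this lemma: it simply cites Griffith \cite[Theorem 4.5]{Gr}, whose proof proceeds via local cohomology of Segre products (in the spirit of the Goto--Watanabe formulas). What you have done is reconstruct a self-contained geometric proof of the cited result: realizing $X_{a,b}$ as the affine cone over $V\times\bP^b$ with $V$ a smooth Calabi--Yau hypersurface ($\omega_V\cong\cO_V$ by adjunction), and reading off $\depth_\fm R=a$ from the cone formula $H^i_\fm(R)\cong\bigoplus_d H^{i-1}(V\times\bP^b,\mathcal{L}^d)$ plus K\"unneth. This buys several things the bare citation does not: it exhibits the obstruction to $S_{a+1}$ concretely as the one-dimensional class $H^{a-1}(V,\cO_V)\otimes H^0(\bP^b,\cO)$ in degree $d=0$; it makes part (2) transparent (the punctured cone is a $\mathbb{G}_m$-bundle over the smooth $V\times\bP^b$, while the vertex is non-regular since $\depth_\fm R=a<a+b=\dim R$); and it makes visible that the restriction $a\ge 2$, $b\ge 1$ is genuinely needed (for $b=0$ the Segre product degenerates to the hypersurface $T^a$, which is Cohen--Macaulay, so ``not $S_{a+1}$'' would fail), which is consistent with the paper's application where $r_i\ge 2$ and $e_i=d_i-\rho(d_i)\ge 1$ automatically. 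Two small points you should spell out if this were to be written up: the K\"unneth degree $q=b$ (the term $H^0(V,\cO_V(d))\otimes H^b(\bP^b,\cO(d))$) must be ruled out explicitly --- it vanishes for every $d$ because $H^0$ on $V$ needs $d\ge 0$ while $H^b$ on $\bP^b$ needs $d\le -b-1$ --- and the identification $R_d\cong H^0(V\times\bP^b,\mathcal{L}^d)$, which kills $H^0_\fm$ and $H^1_\fm$, rests on projective normality of the hypersurface $V$ (arithmetic Cohen--Macaulayness), which is worth stating rather than assuming.
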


Let $\rho \in W$ be a weakly increasing function as in
Section~\ref{defns}, so in particular $\rho \ge \rho_2$.  We define
the $n^{th}$ inclination of $\rho$ by
\begin{equation*}
t_n \rho (k) = 
\left\{
\begin{array}{cc}
\rho (k) & k \le n \\
\rho(n) + k - n & k > n.
\end{array}
\right.
\end{equation*}
Note that $t_n\rho$ is the maximum element of $W$ that agrees with
$\rho$ on $[0,n]$.  It is trivial that $t_m \rho \ge t_n \rho$
whenever $m \le n$.

Fix $\rho\in W$ and $n\ge 3$, and assume that $\rho|_{[0,n]}\ne \id_{[0,n]}$.
This data determines a (nonempty) increasing sequence $(d_1, d_2,...,
d_s)$, consisting of those indices $m \le n$
satisfying $t_n\rho (m + 1) > t_n\rho(m) =t_n\rho(m -1)$.  
Set $e_i = d_i - \rho(d_i)$ and $r_i = \rho(d_i)$.  
We define a variety $\overline{X}_{\rho, n}$ by
\begin{equation*}
\overline{X}_{\rho, n} = \prod_{i = 1}^s X_{r_i, e_i}.
\end{equation*}
Let $x_{r_i, e_i}$ be the closed point of $X_{r_i, e_i}$ corresponding
to the irrelevant ideal of $T^{r_i} \times_K A^{e_i}$.  Note that
$x_{r_i, e_i}$ has codimension $d_i$.  Define a subvariety $X'_{\rho,
  n} \subset \overline{X}_{\rho, n}$ by
\begin{equation*}
X'_{\rho, n} = \bigcup_{1 \le i \le j \le s} (\{x_{r_i, e_i}\} \times \{x_{r_j, e_j}\} \times 
\prod_{\substack{1 \le \ell \le s \\ \ell \ne i, j}} X_{r_\ell, e_\ell}).
\end{equation*}
Finally, we write $X_{\rho, n} = \overline{X}_{\rho, n} \setminus
X'_{\rho, n}$.

\begin{proposition}
The variety $X_{\rho, n}$ satisfies the generalized Serre condition $S_{\rho}$. 
Moreover, if $\rho'\in W$ is a function such that $t_n \rho' > t_n \rho$, 
then $X_{\rho, n}$ does not satisfy $S_{\rho'}$.
\end{proposition}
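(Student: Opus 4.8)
The plan is to analyze the structure sheaf $\cO_{X_{\rho,n}}$ point by point, using that the only singularities of the product $\overline{X}_{\rho,n}$ occur where some coordinate equals the vertex $x_{r_\ell,e_\ell}$ of its factor. Since $X'_{\rho,n}$ removes precisely the locus where at least two coordinates are vertices, every point $P\in X_{\rho,n}$ falls into one of two cases: either all coordinates are smooth points of their factors, so that $P$ is a smooth (hence Cohen--Macaulay) point of the irreducible variety $X_{\rho,n}$; or exactly one coordinate, say the $i$-th, equals $x_{r_i,e_i}$ while the remaining coordinates $P_\ell$ are smooth. The first step is to record the local structure at such a single-vertex point: after completion, $\widehat{\cO}_{X_{\rho,n},P}$ is the completed tensor product over $K$ of $\widehat{\cO}_{X_{r_i,e_i},x_{r_i,e_i}}$ with the regular local rings of the smooth factors, so a K\"unneth argument for local cohomology yields $\depth_P(\cO_{X_{\rho,n}})=\depth_{x_{r_i,e_i}}(\cO_{X_{r_i,e_i}})+\sum_{\ell\ne i}\codim(P_\ell)$. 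By Lemma~\ref{Gr} the vertex term equals $r_i=\rho(d_i)$, while $\codim(P)=d_i+\sum_{\ell\ne i}\codim(P_\ell)$ because $x_{r_i,e_i}$ has codimension $d_i$.

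For the first assertion I would verify $S_\rho$ in each case. At an all-smooth point the sheaf is Cohen--Macaulay, so $\depth_P=\codim(P)\ge\rho(\codim(P))$ since $\rho(k)\le k$. At a single-vertex point, writing $c=\sum_{\ell\ne i}\codim(P_\ell)$, the formula above gives $\depth_P=\rho(d_i)+c$ and $\codim(P)=d_i+c$; since every $\rho\in W$ satisfies $\rho(k+1)-\rho(k)\le 1$, we obtain $\rho(d_i+c)\le\rho(d_i)+c=\depth_P$, which is exactly the condition $S_\rho$ at $P$. Hence $X_{\rho,n}$ is $S_\rho$.

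For the second assertion I would first reduce to a combinatorial claim: if $t_n\rho'>t_n\rho$, then $\rho'(d_i)>\rho(d_i)$ for at least one index $d_i$. Indeed, $t_n\rho'$ and $t_n\rho$ have identical slope beyond $n$, so the strict inequality occurs at some $k_0\le n$, giving $\rho'(k_0)>\rho(k_0)$. Choosing $k_0$ minimal and using that $\rho'$ has slope at most one together with $\rho'(k_0-1)=\rho(k_0-1)$ forces $\rho(k_0)=\rho(k_0-1)$, i.e.\ $\rho$ is flat at the step into $k_0$. Advancing to the end of this flat run produces one of the indices $d_i$ (with $d_i=n$ permitted, since $t_n\rho$ rises past $n$), and monotonicity of $\rho'$ gives $\rho'(d_i)\ge\rho'(k_0)>\rho(k_0)=\rho(d_i)$. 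With such a $d_i$ in hand, I would witness the failure of $S_{\rho'}$ at the generic point $\eta_i$ of the stratum whose $i$-th coordinate is $x_{r_i,e_i}$ and whose other coordinates are unconstrained; this point lies in $X_{\rho,n}$ because only one coordinate is a vertex. There the non-$i$ coordinates are generic, so $c=0$, giving $\codim(\eta_i)=d_i$ and $\depth_{\eta_i}=\rho(d_i)<\rho'(d_i)=\rho'(\codim(\eta_i))$. Thus $\cO_{X_{\rho,n}}$ fails $S_{\rho'}$ at $\eta_i$.

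The main obstacle is the depth additivity formula at the relevant \emph{non-closed} points, especially at the generic point $\eta_i$, where the smooth factors contribute their function fields rather than regular local rings of closed points. This is handled by base change along the field extension $K\hookrightarrow\bigotimes_{\ell\ne i}K(X_{r_\ell,e_\ell})$, under which depth is preserved; combined with the K\"unneth formula for the vertex factor this gives $\depth_{\eta_i}=\depth_{x_{r_i,e_i}}(\cO_{X_{r_i,e_i}})=r_i$. The remaining single-vertex points use only that the smooth factors are regular, so that the inclusion of the vertex factor into the local ring at $P$ is flat with regular fibers and depth adds. A secondary subtlety is the edge case in the combinatorial claim when the flat run reaches $n$, which is precisely why the inclination $t_n\rho$, rather than $\rho$ itself, governs the statement.
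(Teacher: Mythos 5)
Your proof is correct, and its geometric core is the same as the paper's: both decompose $X_{\rho,n}$ into the smooth locus and the single-vertex strata $Y_i$, and both derive the key formula (the paper's equation~\eqref{codimdepth}) $\depth_P = r_i + c$, $\codim(P) = d_i + c$ with $c = \codim_{Y_i}(P)$, via flatness of the projection to $X_{r_i,e_i}$, depth additivity, and Griffith's lemma. (One caution: your phrase ``the remaining coordinates $P_\ell$'' treats a point of the product as a tuple, which is literally valid only for closed points; but since you ultimately run the argument through the fibers $Y_i$ with $c=\codim_{Y_i}(P)$, this is harmless. Note also that the flat-projection argument already covers the generic point $\eta_i$ uniformly --- the fiber's local ring there is a field, of depth $0$ --- so your separate base-change argument for $\eta_i$ is not needed.) Where you genuinely diverge is in the combinatorics. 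For the positive direction, you invoke only the slope bound $\rho(d_i+c)\le\rho(d_i)+c$, which is shorter than the paper's route of comparing with $t_{d_i}\rho_{r_i}$ and checking $t_{d_i}\rho_{r_i}\ge t_n\rho$; the trade-off is that the paper's comparison proves the stronger assertion that $X_{\rho,n}$ is $S_{t_n\rho}$, while yours yields exactly the stated $S_\rho$. For the negative direction, your reduction lemma --- take the minimal $k_0$ with $\rho'(k_0)>\rho(k_0)$, deduce that $t_n\rho$ is flat into $k_0$, and push to the end $d_i$ of that flat run (allowing $d_i=n$) to get $\rho'(d_i)>\rho(d_i)$ --- lets you witness failure of $S_{\rho'}$ always at a \emph{generic} point of some $Y_i$. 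The paper instead splits into two cases depending on where the minimal $k$ falls relative to the $d_i$, and in the second case must use codimension-$k$ points of $Y_i$ that are not generic. Your version is more uniform and arguably cleaner; both are valid.
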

\begin{proof}
Let $Y_i$ denote the subvariety
$\{{x}_{r_i, e_i}\} \times_{X_{r_i, e_i}} X_{\rho, n}$.  By assumption,
$Y_i \cap Y_j = \varnothing$.  Note that $Y_i\cong\prod_{j\ne i}
X_{r_j,e_j}\setminus \{x_{r_j,e_j}\}$, so, by Lemma~\ref{Gr}, it is smooth.

First, we calculate the depth at any point $x \in X_{\rho, n}$.  Since
$\cap_i Y_i^c$ is contained in the smooth locus,
$\depth_x(\Osh_{X_{\rho, n}})=\codim(x)$ if $x\notin \cup_i Y_i$.
Now, suppose that $x\in Y_i$.  Let $R = \Osh_{X_{r_i, e_i}, x_{r_i,
    e_i}}$ and $S = \Osh_{X_{\rho, n},x}$, with corresponding maximal
ideals $\fm$ and $\fn$.  Since the projection map $X_{\rho,n}\to
X_{r_i, e_i}$ is flat, $\depth_\fn (S) = \depth_\fm (R) + \depth_\fn
(S/\fm S)$.  There is a similar equation for codimension.  Since the
fiber over $x_{r_i, e_i}$ of this projection is isomorphic to $Y_i$,
$S / \fm S \cong \Osh_{Y_i,x}$.  An application of Lemma~\ref{Gr}
gives $\depth_\fm(R) = r_i$ and $\depth_\fn (S / \fm S) = \codim_{Y_i}
(x)$.  We deduce that
\begin{equation}
\begin{aligned}\label{codimdepth}
\codim (x) & = d_i + \codim_{Y_i}(x) \\
\depth_x(\Osh_{X_{\rho, n},x}) &= r_i + \codim_{Y_i}(x).
\end{aligned}
\end{equation}
whenever $x\in Y_i$.  (Since $r_i<d_i$, it follows that the
Cohen-Macaulay locus (and the smooth locus) is precisely $\cap_i Y_i^c$.)

To show that $X_{\rho, n}$ is $S_{t_n \rho}$, we need only consider
$x\in Y_i$.  Equation~\eqref{codimdepth} shows that the generalized
Serre condition corresponding to $t_{d_i} \rho_{r_i}$ is satisfied at
such an $x$.  The function $t_{d_i} \rho_{r_i}$ is strictly increasing
until $k = r_i$, nonincreasing on the interval $[r_i,d_i]$, and then
strictly increasing afterwards.  It is easily checked that $t_{d_i}
\rho_{r_i} \ge t_n \rho$.

Finally, we suppose that $\rho' \in W$ is a function such that $t_n
\rho' > t_n \rho$.  In particular, there exists a smallest integer $k$
such that $2 < k \le d_s$ and $\rho'(k) > \rho (k)$.  Note that one
cannot have $k\le d_1$ unless $\rho(k)=\rho(d_1)$.  It is clear that
$X_{\rho, n}$ can not be $S_{\rho'}$ if $k \le d_i$ and $\rho (k) =
\rho(d_i)$, since the generic point of $Y_i$ is a codimension $d_i$
point that has depth $\rho (d_i)$.  Suppose now that $d_{i} < k <
d_{i+1}$ and $\rho(k) < \rho(d_{i+1})$.  Then, $\rho$ is strictly
increasing on the interval $[d_{i}, k]$.  It follows that $\rho'(k) >
r_{i} + k - d_i$.  Choose any point $x\in Y_i$ with codimension $k$ in
$X_{\rho,n}$.  By equation \eqref{codimdepth},
$\depth_x(\Osh_{X_{\rho, n},x})=r_i+(k-d_i)$.  Therefore, $X_{\rho,
  n}$ does not satisfy the condition $S_{\rho'}$, since $r_i + (k-d_i)
< \rho'(k)$.

\end{proof}

\section{$S_\rho$-extension and finite $S_\rho$-ification}\label{section4}

In this section, we investigate the ``$S_\rho$-extension problem'' for
any $\rho\in W$ and its relationship to $S_2$-extension.  In
particular, we apply our results to the finite $S_\rho$-ification
problem.

\subsection{$S_\rho$-extension}

\begin{definition}
  Let $U \subset X$ be an open dense subscheme with complement $Z$.  A
  finite morphism $f : Y \to X$ is $S_\rho$ relative to $U$ if
  $f_*\cO_Y$ satisfies the depth conditions in \eqref{prhoformula}.
\end{definition}
If $\ccodim(Z)\ge 1$, this is equivalent to the statement
$f_*\cO_Y\in{}^{p^+_\rho}\cD(X)^{\ge 0}$.  If $\ccodim Z=1$, this
simply means that $f_* \Osh_Y$ is $S_\rho$.

The initial data for $S_\rho$-extension consists of an open dense
subscheme $U$ of the scheme $X$ and a finite dominant morphism $\z_1:
\tU \to U$ that maps generic points to generic points and satisfies
$\z_{1 *}\cO_{\tU}\in{}^{p^+_\rho}\cD(U)^{\ge 0}$.  We will let
$j:U\to X$ denote the inclusion.

\begin{definition}  We say that a scheme $\tX$ together with a
  morphism $\z:\tX\to X$ is an \emph{$S_\rho$-extension} of $(X,\z_1)$
  if it satisfies the following conditions:
\begin{enumerate}
\item \label{ext1}$\tX$ contains $\tU$ as an open dense subscheme;
\item \label{ext2} $\z$ extends $\z_1$ and is finite;
\item \label{ext3} $\z$ is $S_\rho$ relative to $U$; and
\item \label{ext4} $(\tX,\z)$ is universal among finite dominant
  morphisms $f:Y\to X$ which send generic points to
  generic points, 
  which are $S_\rho$ relative to $U$, and whose restriction
  $f|_{f^{-1}(U)}$ factors through $\z_1$.  (In other words, there
  exists a unique $g:Y\to\tX$ such that
  $f=\z\circ\g$.)
\end{enumerate}
We say that $(\tX,\z)$ is a  \emph{weak $S_\rho$-extension} if it
satisfies conditions~\eqref{ext1} and \eqref{ext2} and $\tX$ is
$S_\rho$ off of $\tU$.
\end{definition}

Note that $\z$ is automatically dominant and takes generic point to
generic points.

\begin{rmk}\label{fixed3.5}  Let $f:Y\to X$ be a finite dominant map extending $\z_1$
  and taking generic points to generic points.  Under these
  conditions, $\ccodim_Y (Y-f^{-1}(U))=\ccodim_X Z$ and
  $\codim(y)=\codim(f(y))$.  (Note that we are using the
  equidimensionality of $X$.)  This means that the perversities
  corresponding to $\rho$ on $X$ and $Y$ are related by
  $p^Y_\rho=p^X_\rho\circ f$, so there is no ambiguity in denoting
  both simply by $p_\rho$.  Moreover, the argument given in
  \cite[Proposition 3.5]{AS2} shows that
  $f_*\cO_Y\in{}^{p_\rho^+}\cD(X)^{\ge 0}$ implies that
  $\cO_Y\in{}^{p_\rho^+}\cD(Y)^{\ge 0}$ in the case $\ccodim Z\ge 2$.
  In particular, if $f$ is $S_\rho$-relative to $U$, then $Y$ is
  $S_\rho$ at all points $y$ with $\codim(y)\ge\ccodim Z$.
\end{rmk}

If $\ccodim Z\ge 2$, $(X,\z_1)$ has an $S_2$-extension if and only if
$\cIC^s(X,\z_{1 *} \Osh_{\tU})$ is defined; moreover, it is given by
$\bSpec(\cIC^{s}(X,\z_{1 *} \Osh_{\tU}))\to X$ ~\cite[Theorem
4.5]{AS2}.  (By Remark~\ref{jstar}, $\cIC^s (X, \z_{1 *}
\Osh_{\tU})=j_* \z_{1 *} \Osh_{\tU})$ is automatically a sheaf of
$\cO_X$-algebras if it is defined Thus, the global Spec makes sense.)
We now give the corresponding result for $S_\rho$-extension.

\begin{theorem}\label{extension} Suppose that $\ccodim Z\ge
  2$.  Then, the pair $(X,\z_1)$ has an $S_\rho$-extension if and only
  if $\cIC^{p_\rho}(X,\z_{1 *} \Osh_{\tU})$ is defined and is a sheaf.
  If it exists, it is given by $\bSpec(\cIC^{p_\rho}(X,\z_{1 *}
  \Osh_{\tU}))\to X$ and coincides with the $S_2$-extension.
\end{theorem}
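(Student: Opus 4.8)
The plan is to reduce the entire statement to the $S_2$-extension theorem \cite[Theorem 4.5]{AS2} by showing that whenever $\cIC^{p_\rho}(X,\z_{1*}\Osh_{\tU})$ is a sheaf it automatically coincides with $\cIC^s(X,\z_{1*}\Osh_{\tU})=j_*\z_{1*}\Osh_{\tU}$. The engine behind this is the comparison $s\le p_\rho$: since $p_\rho$ is a standard perversity, \cite[Lemma 3.3]{AS2} gives $s\le p_\rho$, and the monotonicity of the operation $\pi\mapsto\pi^+$ established in the proof of Proposition~\ref{p3.3} yields $s^+\le p_\rho^+$. Consequently ${}^{p_\rho^+}\cD(X)^{\ge 0}\subseteq {}^{s^+}\cD(X)^{\ge 0}$, and likewise over $U$. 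I note first that the ``defined'' half of the hypothesis is automatic: by Proposition~\ref{ICdefined} the complex $\cIC^{p_\rho}(X,\z_{1*}\Osh_{\tU})$ is defined precisely when $\z_{1*}\Osh_{\tU}\in{}^{p_\rho^+}\cD(U)^{\ge 0}$, which is the standing hypothesis on $\z_1$. Thus the real content of the equivalence is the sheaf condition, and I will treat the two implications separately.

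For sufficiency, suppose $\cA:=\cIC^{p_\rho}(X,\z_{1*}\Osh_{\tU})$ is a sheaf. By Definition~\ref{IC} we have $\cA\in\M^{p_\rho}_\pm(X)\subseteq {}^{p_\rho^+}\cD(X)^{\ge 0}$, so the comparison above gives $\cA\in {}^{s^+}\cD(X)^{\ge 0}$; since $\cA$ is a coherent sheaf and $s$ is standard, Remark~\ref{neg} places $\cA$ in ${}^{s^-}\cD(X)^{\le 0}$ as well, whence $\cA\in\M^s_\pm(X)$. As $j^*\cA=\z_{1*}\Osh_{\tU}$, the equivalence of Proposition~\ref{equiv} for the perversity $s$ forces $\cA=\cIC^s(X,\z_{1*}\Osh_{\tU})=j_*\z_{1*}\Osh_{\tU}$ (Remark~\ref{jstar}), which is a sheaf of $\cO_X$-algebras. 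Setting $\tX=\bSpec(\cA)$ with structure map $\z$, the pair $(\tX,\z)$ is literally the $S_2$-extension, so it satisfies \eqref{ext1} and \eqref{ext2}; it satisfies \eqref{ext3} because $\cA=\z_*\Osh_{\tX}\in {}^{p_\rho^+}\cD(X)^{\ge 0}$. This simultaneously yields the last two assertions of the theorem: the extension is $\bSpec(\cIC^{p_\rho}(X,\z_{1*}\Osh_{\tU}))$ and coincides with the $S_2$-extension.

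The universal property \eqref{ext4} is then obtained by transport from the $S_2$-case. Every finite dominant $f\colon Y\to X$ that is $S_\rho$ relative to $U$ satisfies $f_*\Osh_Y\in {}^{p_\rho^+}\cD(X)^{\ge 0}\subseteq {}^{s^+}\cD(X)^{\ge 0}$ and so is $S_2$ relative to $U$; hence the class of competitors in \eqref{ext4} is contained in the competitor class for the $S_2$-extension. Since $(\tX,\z)$ is terminal in the latter, larger class and is itself an $S_\rho$-competitor by \eqref{ext3}, it is a fortiori terminal in the former, smaller class, giving \eqref{ext4}. For the converse, if $(\tX,\z)$ is an $S_\rho$-extension then $\z_*\Osh_{\tX}$ is coherent, lies in ${}^{p_\rho^+}\cD(X)^{\ge 0}$ by \eqref{ext3} and in ${}^{p_\rho^-}\cD(X)^{\le 0}$ by Remark~\ref{neg}, so $\z_*\Osh_{\tX}\in\M^{p_\rho}_\pm(X)$; since $j^*\z_*\Osh_{\tX}=\z_{1*}\Osh_{\tU}$, Proposition~\ref{equiv} identifies it with $\cIC^{p_\rho}(X,\z_{1*}\Osh_{\tU})$, which is therefore a sheaf.

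I expect the main obstacle to be the bookkeeping in the universal property \eqref{ext4}: one must verify that the ``factoring through $\z_1$'' requirement and the generic-point conditions are preserved when passing between the $S_\rho$- and $S_2$-competitor classes, and that the unique $S_2$-comparison map $g\colon Y\to\tX$ is finite and respects the scheme structure. A secondary technical point, used in both directions, is the identification $j^*\z_*\Osh_{\tX}=\z_{1*}\Osh_{\tU}$, i.e.\ that $\z^{-1}(U)=\tU$; this follows from the finiteness of $\z$ together with the density of $\tU$ and the extension condition, exactly as in the setup of \cite{AS2}, using the matching of perversities on $X$ and $Y$ recorded in Remark~\ref{fixed3.5}.
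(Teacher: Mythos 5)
Your proof is correct and follows essentially the same route as the paper: both directions hinge on identifying any coherent-sheaf extension lying in ${}^{p_\rho^+}\cD(X)^{\ge 0}$ with the intermediate extension (you rederive this from Proposition~\ref{equiv} and Remark~\ref{neg}, where the paper cites \cite[Lemma 3.4]{AS2}), and the sufficiency direction reduces to the $S_2$-extension theorem of \cite{AS2} via $s\le p_\rho$, monotonicity of $\pi\mapsto\pi^+$, and the resulting equality $\cIC^{p_\rho}(X,\z_{1*}\Osh_{\tU})=\cIC^{s}(X,\z_{1*}\Osh_{\tU})$, with the universal property obtained a fortiori from the containment of the $S_\rho$-competitor class in the $S_2$-competitor class. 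Your explicit unpacking of that ``a fortiori'' step and of the identification $\z^{-1}(U)=\tU$ only makes explicit what the paper leaves implicit.
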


\begin{proof} 
  First, suppose that $(\tX,\z)$ is an $S_\rho$-extension.  By
  property~\eqref{ext3}, $\z_*\Osh_{\tX}\in\cM^{p_\rho}_\pm(X)$;
  restricting to $U$, we see that $\z_{1
    *}\Osh_{\tU}\in\cM^{p_\rho}_\pm(U)$.  This means that
  $\cIC^{p_\rho}(X,\z_{1 *} \Osh_{\tU})$ is defined.  By \cite[Lemma
  3.4]{AS2}, if a coherent sheaf on $X$ extending $\z_{1 *}
  \Osh_{\tU}$ is contained in ${}^{p_\rho^+}\cD(X)^{\ge 0}$, then it
  is isomorphic to $\cIC^{p_\rho}(X,\z_{1 *} \Osh_{\tU})$.  Thus,
  $\cIC^{p_\rho}(X,\z_{1 *} \Osh_{\tU})\cong \z_*\Osh_{\tX}$ is a
  sheaf.

  Conversely, suppose $\cIC^{p_\rho}(X,\z_{1 *} \Osh_{\tU})$ is
  defined and is a sheaf.  Since $p_\rho\ge s$, $\cIC^{s}(X,\z_{1 *}
  \Osh_{\tU})$ is defined, and the same argument given in the previous
  paragraph implies that $\cIC^{p_\rho}(X,\z_{1 *}
  \Osh_{\tU})=\cIC^{s}(X,\z_{1 *} \Osh_{\tU})$.  Accordingly,
  $\tX=\bSpec(\cIC^{p_\rho}(X,\z_{1 *} \Osh_{\tU}))$ is the
  $S_2$-extension of $(X,\z_1)$ and a fortiori satisfies the
  $S_\rho$-extension universal property.  Finally, $\z$ is $S_\rho$
  relative to $U$ because $\z_*\Osh_{\tX}=\cIC^{p_\rho}(X,\z_{1 *}
  \Osh_{\tU})\in {}^{p_\rho^+}\cD(X)^{\ge 0}$. 
\end{proof}

We will see later that under the conditions of the theorem, a weak
$S_\rho$-extension is automatically an $S_\rho$-extension.  We first
prove this for $S_2$-extension.

\begin{proposition}\label{411}
  Suppose that $\ccodim Z\ge 2$ and that $(X,\z_1)$ has an
  $S_2$-extension.  Then a weak $S_2$-extension $(\hX,\hz)$ coincides
  with the $S_2$-extension.
\end{proposition}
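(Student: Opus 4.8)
The plan is to compare $\hX$ and $\tX$ through their structure algebras on $X$. Since both $\hz$ and the $S_2$-extension morphism are finite, we have $\hX=\bSpec(\hz_*\Osh_{\hX})$ and, by Remark~\ref{jstar} together with the description of the $S_2$-extension recalled before Theorem~\ref{extension}, $\tX=\bSpec(\cIC^s(X,\z_{1 *}\Osh_{\tU}))$ with $\cIC^s(X,\z_{1 *}\Osh_{\tU})=j_*\z_{1 *}\Osh_{\tU}$. Thus it suffices to produce an isomorphism of $\Osh_X$-algebras $\hz_*\Osh_{\hX}\cong\cIC^s(X,\z_{1 *}\Osh_{\tU})$ that restricts to the identity over $U$; applying $\bSpec$ then yields an isomorphism $\hX\cong\tX$ over $X$ compatible with the embeddings of $\tU$, which is exactly what it means for the two extensions to coincide. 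To identify $\hz_*\Osh_{\hX}$ with the intermediate extension I would invoke \cite[Lemma 3.4]{AS2}, just as in the proof of Theorem~\ref{extension}: it is enough to check that $\hz_*\Osh_{\hX}$ is a coherent sheaf which \emph{extends} $\z_{1 *}\Osh_{\tU}$ and which lies in ${}^{s^+}\cD(X)^{\ge 0}$ (membership in ${}^{s^-}\cD(X)^{\le 0}$ is automatic for a coherent sheaf by Remark~\ref{neg}).

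The first technical point is that $\hz^{-1}(U)=\tU$, which makes $\hz_*\Osh_{\hX}|_U=\z_{1 *}\Osh_{\tU}$ and so guarantees the extension condition. Here I would argue via cancellation. The open immersion $\iota:\tU\hookrightarrow\hz^{-1}(U)$ fits into $\hz|_{\hz^{-1}(U)}\circ\iota=\z_1$, in which $\z_1$ is finite (hence proper) and the base change $\hz|_{\hz^{-1}(U)}:\hz^{-1}(U)\to U$ is finite (hence separated). By the cancellation property for proper morphisms, $\iota$ is proper; but a proper open immersion is simultaneously an open and a closed immersion, hence an isomorphism onto an open-and-closed subscheme. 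Since $\tU$ is dense in $\hX$, and therefore in $\hz^{-1}(U)$, this clopen subscheme must be all of $\hz^{-1}(U)$, so $\hz^{-1}(U)=\tU$. Base change of the finite map $\hz$ along the open immersion $j$ then gives $\hz_*\Osh_{\hX}|_U=\z_{1 *}\Osh_{\tU}$, as needed.

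Next I would verify $\hz_*\Osh_{\hX}\in{}^{s^+}\cD(X)^{\ge 0}$. By the argument of \cite[Proposition 3.5]{AS2} recalled in Remark~\ref{fixed3.5}, which uses only the finiteness of $\hz$ and the compatibility of $i_x^!$ with finite pushforward, this is equivalent to $\Osh_{\hX}\in{}^{s^+}\cD(\hX)^{\ge 0}$; note that $\hX$ is equidimensional since $\hz$ is finite, dominant, and sends generic points to generic points. I would then check the depth bound \eqref{prhoformula} (for $\rho=\rho_2$ and $n=\ccodim Z$) at each $y\in\hX$ pointwise. For $y\in\tU$ this is precisely the hypothesis $\z_{1 *}\Osh_{\tU}\in{}^{s^+}\cD(U)^{\ge 0}$, transported through the analogous equivalence for the finite map $\z_1$. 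For $y\notin\tU$ the scheme $\hX$ is $S_2$, so $\depth_y\Osh_{\hX}\ge\min(\codim(y),2)$, and a short case-check shows that $\min(\codim(y),2)$ dominates all three branches of \eqref{prhoformula} when $\rho=\rho_2$. Hence $\Osh_{\hX}\in{}^{s^+}\cD(\hX)^{\ge 0}$, and therefore $\hz_*\Osh_{\hX}\in{}^{s^+}\cD(X)^{\ge 0}$.

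With both inputs established, \cite[Lemma 3.4]{AS2} yields $\hz_*\Osh_{\hX}\cong\cIC^s(X,\z_{1 *}\Osh_{\tU})$ compatibly with restriction to $U$, and applying $\bSpec$ completes the proof. I expect the identification $\hz^{-1}(U)=\tU$ to be the main obstacle: without it the restriction of $\hz_*\Osh_{\hX}$ to $U$ could be strictly larger than $\z_{1 *}\Osh_{\tU}$ and \cite[Lemma 3.4]{AS2} would not apply. The cancellation argument is what makes this step go through, and it is the one place where the precise meaning of ``$\hz$ extends $\z_1$'' together with the density of $\tU$ is genuinely used; the depth bookkeeping for $\rho_2$ is then routine.
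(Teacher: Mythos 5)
Your proof is correct, and it rests on the same two pillars as the paper's own argument: the uniqueness statement of \cite[Lemma 3.4]{AS2} (a coherent extension lying in the appropriate ${}^{+}$-category must be the intermediate extension) and the compatibility of the perverse conditions with finite pushforward. The difference is where the identification is performed, and this changes the bookkeeping. The paper works upstairs on $\hX$: there $\cO_{\hX}$ tautologically extends $\cO_{\tU}$, the weak-extension hypothesis supplies the depth bounds off $\tU$, so $\cO_{\hX}\cong\cIC^s(\hX,\cO_{\tU})\cong\hat{j}_*\cO_{\tU}$ by Remark~\ref{jstar}; applying $\hz_*$ and using $\hz\circ\hat{j}=j\circ\z_1$ immediately gives $\hz_*\cO_{\hX}\cong j_*\z_{1*}\cO_{\tU}\cong\cIC^s(X,\z_{1*}\cO_{\tU})$, and $\bSpec$ finishes. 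You instead work downstairs on $X$, applying the uniqueness lemma to $\hz_*\cO_{\hX}$; this is exactly why you must first prove $\hz^{-1}(U)=\tU$, since otherwise the restriction of $\hz_*\cO_{\hX}$ to $U$ need not be $\z_{1*}\cO_{\tU}$. Your cancellation argument for that identity is correct, but it is a step the paper's organization never needs, because pushing forward $\hat{j}_*\cO_{\tU}$ lands on $j_*\z_{1*}\cO_{\tU}$ automatically. Note also that you use the implication $\cO_{\hX}\in{}^{s^+}\cD(\hX)^{\ge 0}\Rightarrow\hz_*\cO_{\hX}\in{}^{s^+}\cD(X)^{\ge 0}$, which is the reverse of the direction quoted in Remark~\ref{fixed3.5}; it does follow from the argument of \cite[Proposition 3.5]{AS2}, as you say, but you are citing the argument rather than the stated remark (the paper only ever needs the stated direction, applied to $\z_1$). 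Two harmless wrinkles: your pointwise check invokes the bound \eqref{prhoformula} (i.e., $p^+_{\rho_2}$) where membership in ${}^{s^+}\cD(\hX)^{\ge 0}$ is what the lemma requires --- this costs nothing because every point outside $\tU$ has codimension at least $\ccodim Z$, where both conditions demand depth $\ge 2$, and at points of $\tU$ you fall back on the hypothesis on $\z_{1*}\cO_{\tU}$; and you should state explicitly that $\cIC^s(X,\z_{1*}\cO_{\tU})$ is defined because the $S_2$-extension exists (\cite[Theorem 4.5]{AS2}), which is what licenses the final application of the lemma. On balance the paper's route is shorter and avoids point-set topology; yours makes the useful geometric fact $\hz^{-1}(U)=\tU$ explicit and keeps every verification over the base $X$.
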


\begin{proof} Let $\hat{j}:\tU\to\hX$ be the inclusion.  By
  Remark~\ref{fixed3.5}, $\tU$ is $S_2$ at all points with codimension
  at least $\ccodim Z$, so $\cIC^s(\hX,\cO_{\tU})$ is defined.  Since
  $\hX$ is $S_2$ off of $\tU$, we obtain
  $\cO_{\hX}\cong\cIC^s(\hX,\cO_{\tU})\cong \hat{j}_*\cO_{\tU}$, where
  the last isomorphism holds by Remark~\ref{jstar}.  Applying
  $\hz_*$ gives $\hr_*\cO_{\hX}\cong j_*\z_{1
    *}\cO_{\tU}\cong\cIC^s(X,\z_{1 *}\cO_{\tU})$.  Thus, $\hX$ is
  isomorphic to the $S_2$ extension $\bSpec(\cIC^s(X,\z_{1
    *}\cO_{\tU}))$.
\end{proof}

Let $f : Y \to X$ be a finite map, and let $j' :f^{-1} (U)
\hookrightarrow Y$ be the inclusion.  Write $f_1 = f |_{f^{-1}(U)} :
f^{-1} (U) \to U$.  We say that $Y$ is the integral closure of $X$
relative to $f^{-1}(U)$ (resp. $Y$ is integrally closed relative to
$f^{-1}(U)$) if $Y = \bSpec(\cF)$, where $\cF$ is the integral closure
of $\Osh_X$ in $j_* f_{1 *} (\Osh_{f^{-1}(U)})$ (resp. $\Osh_Y$ is
integrally closed in $j'_* \Osh_{f^{-1}(U)}$).  (See~\cite[Proposition
6.3.4]{Groth}).  If we relax the condition that $\ccodim Z\ge 2$, we
can prove a weaker version of $S_2$-extension as long as the integral
closure of $X$ relative to $\tU$ is finite over $X$.

\begin{proposition}\label{univprop}  Suppose that the integral closure of $X$ relative
  to $\tU$ is finite over $X$.  Let $\hX$ be the associated reduced
  scheme.  The natural morphism $\z : \hX \to X$ is universal with
  respect to finite morphisms $f : Y \to X$ satisfying the following
  properties:
\begin{enumerate}
\item\label{one} $f^{-1} (U)$ is dense in $Y$;
\item\label{two} $f|_{f^{-1}(U)}$ factors through $\z_1$; and
\item\label{three} $Y$ is reduced and integrally closed relative to $f^{-1} (U)$.
\end{enumerate}
\end{proposition}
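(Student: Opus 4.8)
The plan is to translate everything into commutative algebra over $X$ and then reduce to the universal property of integral closure. Since $\z:\hX\to X$ is finite and every competitor $f:Y\to X$ is finite, both are affine, so giving an $X$-morphism $g:Y\to\hX$ is the same as giving an $\Osh_X$-algebra homomorphism $\z_*\Osh_{\hX}\to f_*\Osh_Y$. Write $V=f^{-1}(U)$, $f_1=f|_V:V\to U$, and $j':V\hookrightarrow Y$ for the inclusion, and recall that $\z_*\Osh_{\hX}=\cF_{\mathrm{red}}$, where $\cF$ is the integral closure of $\Osh_X$ in $\cB:=j_*\z_{1*}\Osh_{\tU}$. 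First I would record the identification $\cC:=f_*j'_*\Osh_V=j_*f_{1*}\Osh_V$ together with the inclusion $f_*\Osh_Y\hookrightarrow\cC$; the latter is injective because $Y$ is reduced and $V$ is dense (property~(\ref{one})), so $V$ is schematically dense in $Y$.

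Next, existence. Property~(\ref{two}) supplies a factorization $f_1=\z_1\circ h$ for some $h:V\to\tU$, and applying $j_*$ to the induced $\Osh_U$-algebra map $\z_{1*}\Osh_{\tU}\to f_{1*}\Osh_V$ yields an $\Osh_X$-algebra homomorphism $\psi:\cB\to\cC$. The crucial step is to verify that $\psi(\cF)\subseteq f_*\Osh_Y$ inside $\cC$. Each local section of $\cF$ is integral over $\Osh_X$, so its image under the $\Osh_X$-algebra map $\psi$ is integral over the image of $\Osh_X$ in $\cC$, hence over $f_*\Osh_Y$. On the other hand, since $f$ is affine, integral closure commutes with $f_*$, so property~(\ref{three})---that $\Osh_Y$ is integrally closed in $j'_*\Osh_V$---implies that $f_*\Osh_Y$ is integrally closed in $\cC=f_*j'_*\Osh_V$. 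Combining these two facts forces $\psi(\cF)\subseteq f_*\Osh_Y$, giving an $\Osh_X$-algebra map $\cF\to f_*\Osh_Y$. Because $Y$ is reduced, $f_*\Osh_Y$ has no nilpotents, so this map factors uniquely through $\cF_{\mathrm{red}}$ and produces $g:Y\to\hX$ with $\z\circ g=f$.

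Finally, uniqueness. As $\z$ is affine, hence separated over $X$, and $V$ is schematically dense in $Y$, any $X$-morphism $g$ with $\z\circ g=f$ is determined by its restriction $g|_V:V\to\z^{-1}(U)$; over $U$ one checks that $\cF|_U=\z_{1*}\Osh_{\tU}$ (every section there is already integral over $\Osh_U$ since $\z_1$ is finite), so $\z^{-1}(U)$ is the reduction of $\tU$ and $g|_V$ is pinned down by the $\z_1$-factorization underlying property~(\ref{two}); schematic density then propagates this to all of $Y$. I expect the main obstacle to be the containment $\psi(\cF)\subseteq f_*\Osh_Y$: it is precisely here that hypothesis~(\ref{three}) and the compatibility of integral closure with the affine pushforward $f_*$ must be combined, and this is what guarantees that the constructed morphism lands in $\hX$ rather than merely in $\bSpec\cB$. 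The reduction and density arguments are then routine and may be organized directly through the relative integral closure of \cite[Proposition 6.3.4]{Groth}.
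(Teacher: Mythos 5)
Your proposal follows essentially the same route as the paper's proof: construct the $\Osh_X$-algebra map $\psi : j_*\z_{1*}\Osh_{\tU} \to j_*f_{1*}\Osh_{f^{-1}(U)}$ from the factorization of $f_1$ through $\z_1$, observe that $f_*\Osh_Y$ injects into the target (reducedness of $Y$ plus density of $f^{-1}(U)$) and is integrally closed there (via affineness of $f$), and conclude that the integral closure of $\Osh_X$ maps into $f_*\Osh_Y$, yielding the morphism $Y\to\hX$. The only difference is one of detail: the paper delegates the final factorization step to \cite[6.3.5]{Groth}, whereas you unpack it directly (integrality of sections is preserved by $\psi$, and the map factors through $\cF_{\mathrm{red}}$ since $f_*\Osh_Y$ is reduced) and also make explicit the uniqueness argument via schematic density and separatedness, which the paper leaves implicit.
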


\begin{rmk}  In this proposition, we do not need to assume that $\z_1$
  is dominant or that it takes generic points to generic points.
\end{rmk}

\begin{rmk} We note that condition~\eqref{three} is equivalent to the following by
Serre's criterion \cite[Theorem 11.5]{Eis}.
\begin{enumerate}
\item[(3')] $Y$ is reduced, and $Y$ satisfies $S_2$ and $R_1$ away
  from $f^{-1} (U)$.
\end{enumerate} 
\end{rmk}

\begin{rmk}If the base $S$ is a Nagata scheme, then the condition that
$\tX$ is finite over $X$ is automatically satisfied.
\end{rmk}

The pair $(\hX,\z)$ given in the proposition is a particular weak
$S_2$-extension of $(X,\z_1)$.  If $\ccodim Z\ge 2$, then it is the
$S_2$-extension by Proposition~\ref{411}.

\begin{proof}
There is a natural morphism of quasi-coherent sheaves of $\Osh_X$-algebras
 \begin{equation}\label{qcoh}
 j_* \z_{1 *} (\Osh_{\tU}) \to j_* f_{1 *} (\Osh_{f^{-1} (U)})
 \end{equation}
defined as follows.  Since $f_1$ factors finitely through $\z_1$, we may write
$f_1 = \z_1 \circ f_1'$ where $f_1' : f^{-1} (U) \to \tU$ is a finite map.  To obtain 
\eqref{qcoh}, we simply apply the functor $j_* \z_{1 *}$ to the 
adjunction map $\Osh_{\tU} \to (f_1')_* (f_1')^* (\Osh_{\tU}) \cong (f_1')_* (\Osh_{f^{-1}(U)}) $.  

By assumption, $f_* \Osh_Y$ is integrally closed in $j_* (f_{1 *})
(\Osh_{f^{-1} (U)})$.  Moreover, the map  $f_* \Osh_Y\to j_* (f_{1 *})
(\Osh_{f^{-1} (U)})$ is injective;  indeed, since $f^{-1}(U)$ is dense in $Y$ and
$Y$ is reduced, the morphism $\Osh_Y \to (j')_* \Osh_{f^{-1} (U)}$ is
injective.   Finally, \cite[6.3.5]{Groth} shows that
there is a canonical morphism $Y \to \hX$.

\end{proof}

\begin{lemma}\label{R1}
  Suppose that $f : Y \to X$ is a finite dominant morphism of reduced
  Noetherian schemes that takes generic points to generic points.
  Suppose that $U$ is a dense open subscheme of $X$ such that $f$
  induces an isomorphism between $f^{-1} (U)$ and $U$.  Let $x\in
  X\setminus U$ be a codimension one point that is regular.  Then,
  there exists a unique point $y \in Y$ (necessarily of codimension
  one) lying above $x$.  Moreover, $y$ is regular, and the map $f^*_y
  : \Osh_{X,x} \to \Osh_{Y,y}$ is an isomorphism.
\end{lemma}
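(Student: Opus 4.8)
The plan is to localize everything at $x$ and reduce the statement to a fact about a finite algebra over a discrete valuation ring, where the decisive input is that a regular codimension-one local ring is normal.

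First I would set $R=\Osh_{X,x}$. Since $x$ is regular of codimension one, $R$ is a discrete valuation ring, in particular a normal domain; let $K$ be its fraction field. Because $R$ is a domain, $x$ lies on a unique irreducible component of $X$, whose generic point $\eta$ is the only generization of $x$ and satisfies $\Osh_{X,\eta}=K$. As $\eta$ is a generic point of $X$ and $U$ is dense, $\eta\in U$; the isomorphism $f^{-1}(U)\cong U$ then produces a unique point $\tilde\eta\in Y$ over $\eta$, with $\Osh_{Y,\tilde\eta}\cong\Osh_{U,\eta}=K$.

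Next I would study $B:=(f_*\Osh_Y)_x$. As $f$ is finite, $B$ is a finite $R$-algebra, and it is reduced since $Y$ is reduced. Points of $Y$ over $x$ correspond to the maximal ideals of $B$, and for such $y$ one has $\Osh_{Y,y}=B_{\fm_y}$; moreover $B\otimes_R K=(f_*\Osh_Y)_\eta=\Osh_{U,\eta}=K$. The crucial step is to show that $B$ is a domain. Here I would invoke the hypothesis that $f$ carries generic points to generic points: a minimal prime of $B$ is represented by a point of $Y$ lying over $\Spec R$ with no generization inside $Y\times_X\Spec R$, and since the only generization of $x$ is $\eta$ and no generic point of $Y$ can map to the non-generic point $x$, any such point must lie over $\eta$ and hence coincide with $\tilde\eta$. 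Thus $\tilde\eta$ is the unique minimal prime of $B$, so the reduced ring $B$ is a domain with $R\subseteq B\subseteq K$ and fraction field $K$.

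Finally, I would conclude using normality. Being finite over $R$, the domain $B$ is integral over $R$; since $R$ is a discrete valuation ring and therefore integrally closed in $K$, this forces $B=R$. As $R$ is local, $B$ has a single maximal ideal, giving a unique point $y\in Y$ over $x$, with $\Osh_{Y,y}=B=R$ a discrete valuation ring; hence $y$ is regular of codimension one, and $f^*_y:\Osh_{X,x}\to\Osh_{Y,y}$ is identified with the inclusion $R\hookrightarrow B=R$, an isomorphism. The main obstacle is the domain (equivalently, torsion-freeness over $R$) step: one must rule out a component of the fiber $Y\times_X\Spec R$ supported over the closed point $x$, and this is precisely where the generic-point hypothesis, combined with $x$ having a single generization, is indispensable; once $B$ is a domain inside $K$, the passage to $B=R$ is an immediate consequence of normality.
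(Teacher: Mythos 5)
Your proof is correct, but it takes a genuinely different route from the paper's. The paper first reduces to the case where $X$ and $Y$ are integral (using the hypotheses to get a bijection of irreducible components and an isomorphism $K(X)\cong K(Y)$), then obtains the unique point $y$ over $x$ from the valuative criterion of properness applied to the discrete valuation ring $\Osh_{X,x}$, and finally gets the isomorphism of local rings from the fact that a local ring dominating a valuation ring with the same fraction field must equal it. You instead work entirely locally with the finite reduced $R$-algebra $B=(f_*\Osh_Y)_x$, where $R=\Osh_{X,x}$: you show $B$ has a unique minimal prime (this is where reducedness of $Y$, the generic-points hypothesis, and the isomorphism over $U$ all enter, by excluding components of $\Spec B$ concentrated over the closed point), so $B$ is a domain with fraction field $K$, and then conclude $B=R$ from integral closedness of the DVR. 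The two arguments are cousins---the valuative criterion for finite morphisms is itself proved via integrality---but yours replaces the properness/domination machinery with the single elementary fact that a DVR is normal, and it avoids the global reduction to irreducible schemes by localizing at $x$ from the start. What the paper's version buys is brevity given standard geometric machinery; what yours buys is a self-contained commutative-algebra argument in which the precise role of each hypothesis (reducedness, generic points to generic points, $f^{-1}(U)\cong U$) is made explicit at the step where $B$ is shown to be a domain.
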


\begin{proof}
  The hypotheses imply that $f$ induces a bijection between the
  irreducible components of $Y$ and $X$.  Since $x$ is regular, it
  lies in a single component of $X$.  This means that $f^{-1}(y)$ can
  only intersect the corresponding component of $Y$.  We may
  accordingly assume without loss of generality that $X$ and $Y$ are
  irreducible, hence integral, and $f$ induces an isomorphism of
  function fields $K(X)\cong K(Y)$.

Since $x$ is regular, $f^{-1}(x)$ contains a single point $y$ by the
valuative criterion of properness.  Moreover, the local ring $\Osh_{Y,y}$ 
dominates the valuation ring $f^*_y\Osh_{X,x}$ in $K(Y)$, so they are
equal.  Hence, $\Osh_{Y,y}$ is also a discrete valuation ring, and
$f^*_y : \Osh_{X,x} \to \Osh_{Y,y}$ is an isomorphism.

\end{proof}

\begin{proposition}\label{iso}   Let $X$, $Y$, and $f:Y\to X$ satisfy
  the conditions in Proposition~\ref{univprop}.  Suppose that $f$ is
  dominant and takes generic points to generic points and that the
  given factorization map $g_1:f^{-1}(U)\to\tU$ is an isomorphism.  If
  we further assume that $Y$ (and hence $\tU$) is $S_2$, then the map
  $g: Y \to \hX$ constructed in Proposition~\ref{univprop} is an
  isomorphism.
\end{proposition}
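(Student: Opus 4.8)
The plan is to construct the inverse of $g$ by the $S_2$-extension (Hartogs) principle, after checking that $g$ is already an isomorphism in codimension at most one; Lemma~\ref{R1} handles codimension one, and the $S_2$ hypothesis handles the rest. First I would record the basic features of the canonical map $g:Y\to\hX$ furnished by Proposition~\ref{univprop}. Since $Y$ and $\hX$ are both finite over $X$ and $g$ is a morphism over $X$, the map $g$ is finite. Because $\hX=\bSpec(\cF)$ with $\cF$ the integral closure of $\Osh_X$ in $j_*\z_{1*}\Osh_{\tU}$ and $\z_1$ is finite, the restriction of $\hX$ over $U$ is exactly $\tU$; as the factorization $g_1:f^{-1}(U)\to\tU$ is assumed to be an isomorphism, $g$ restricts to an isomorphism over $U$. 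In particular $g$ is finite and dominant, takes generic points to generic points, and both $Y$ and $\hX$ are reduced, so Lemma~\ref{R1} is applicable to $g$.

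Next I would verify that both $Y$ and $\hX$ are $S_2$. For $Y$ this is the hypothesis. For $\hX$, condition~\eqref{three} of Proposition~\ref{univprop} (integral closedness relative to $\tU$) together with Serre's criterion shows that $\hX$ is $S_2$ and $R_1$ away from $\tU$; over $\tU$ it agrees with $f^{-1}(U)\cong\tU$, which is $S_2$ because $Y$ is (this is exactly the point of the parenthetical hypothesis ``and hence $\tU$''). Hence $\hX$ is $S_2$ everywhere. Now let $V\subseteq\hX$ be the open locus over which $g$ is an isomorphism. The generic points of $\hX$ lie over those of $X$, hence in $\tU\subseteq V$; and any codimension-one point $w\notin\tU$ is regular since $\hX$ is $R_1$ off $\tU$, so Lemma~\ref{R1} applied to $g$ produces a unique regular point above $w$ at which $g$ is a local isomorphism. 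Thus $V$ contains every point of codimension at most one, and $\hX\setminus V$ has codimension at least two.

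Finally I would glue. Pushing forward to $X$, the map $g$ induces a morphism $g^\#:\z_*\Osh_{\hX}\to f_*\Osh_Y$ of coherent $\Osh_X$-modules, which is an isomorphism over the image of $V$, i.e. away from a closed set of codimension at least two. Since $Y$ and $\hX$ are $S_2$ and the finite morphisms to $X$ preserve codimension (using equidimensionality, as in Remark~\ref{fixed3.5}), both $\z_*\Osh_{\hX}$ and $f_*\Osh_Y$ have depth at least two at every point of codimension at least two; hence each is canonically isomorphic to the pushforward of its restriction to the codimension-$\le 1$ locus, by the $S_2$ (``Hartogs'') extension property that underlies Remark~\ref{jstar}. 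Applying this pushforward to the isomorphism $g^\#|_V$ shows that $g^\#$ is an isomorphism, and therefore so is $g$. The main obstacle is this last step: one must check carefully that both structure pushforwards genuinely satisfy the codimension-$\ge 2$ depth condition, so that the $S_2$-extension identification applies to each, and that $g^\#$ is compatible with the two identifications.

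As a cross-check I would note a shortcut that avoids the gluing: $f_*\Osh_Y$ is integral over $\Osh_X$ (as $f$ is finite), hence contained in $\cF$, while condition~\eqref{three} says $f_*\Osh_Y$ is integrally closed in $j_*\z_{1*}\Osh_{\tU}$, and $\cF$ is integral over $\Osh_X\subseteq f_*\Osh_Y$; these two facts force $f_*\Osh_Y=\cF=\z_*\Osh_{\hX}$ and hence show $g$ is an isomorphism directly. I would nevertheless present the first argument as primary, since it is the one that uses the $S_2$ hypothesis and Lemma~\ref{R1} in the way the preceding material is set up.
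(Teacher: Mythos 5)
Your primary argument is correct and is essentially the paper's own proof: the paper also first produces an open $V\supseteq\tU$ over which $g$ is an isomorphism (taking $V$ to be the complement of the support of $g_*\cO_Y/\cO_{\hX}$, with Lemma~\ref{R1} handling the codimension-one points of $\hX\setminus\tU$, which are regular because $\hX$ is integrally closed relative to $\tU$, hence $R_1$ off $\tU$ --- a point you make explicit and the paper leaves implicit). The only divergence is in how the isomorphism is propagated across the remaining locus of c-codimension at least $2$: the paper invokes Proposition~\ref{411}, viewing $(Y,g)$ and $(\hX,\id)$ as weak $S_2$-extensions over the base $\hX$ of the open piece $V$, whereas you carry out the same Hartogs-type gluing by hand on pushed-forward structure sheaves over $X$; this is the same principle in different packaging, and your depth bounds do follow from Remark~\ref{fixed3.5} as you indicate.

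The more interesting point is your final ``cross-check'': it is not merely a cross-check but a complete proof, genuinely different from the paper's, and in fact stronger. Since $f$ is finite, every local section of $f_*\cO_Y$ is integral over $\cO_X$; since $Y$ is reduced and $f^{-1}(U)$ is dense, $f_*\cO_Y$ embeds into $j_*f_{1*}\cO_{f^{-1}(U)}\cong j_*\z_{1*}\cO_{\tU}$ (the identification using that $g_1$ is an isomorphism); and condition~(3) of Proposition~\ref{univprop}, pushed forward along the affine morphism $f$ (check it on affine opens of $X$), says exactly that $f_*\cO_Y$ is integrally closed in this ambient algebra. The elementary sandwich fact --- if $A\subseteq B\subseteq C$ with $B$ integral over $A$ and integrally closed in $C$, then $B$ is the integral closure of $A$ in $C$ --- then forces $f_*\cO_Y=\cF=\z_*\cO_{\hX}$ as subalgebras of $j_*\z_{1*}\cO_{\tU}$, and since the canonical map $g$ of Proposition~\ref{univprop} is precisely $\bSpec$ of this inclusion, $g$ is an isomorphism. (One pedantic refinement: since $X$ is not assumed reduced, phrase the integrality statements in terms of the \emph{image} of $\cO_X$ in $j_*\z_{1*}\cO_{\tU}$; nothing changes.) Note what this buys: it uses neither the $S_2$ hypothesis on $Y$, nor Lemma~\ref{R1}, nor the dominance/generic-point hypotheses, so it proves the proposition under strictly weaker assumptions --- unsurprisingly in hindsight, since condition~(3) already makes $Y$ both $S_2$ and $R_1$ away from $f^{-1}(U)$, and over $U$ both schemes are identified with $\tU$ from the start. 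What the paper's route buys instead is a demonstration of the weak-extension uniqueness machinery (Proposition~\ref{411}) in action, which is the tool reused in Theorem~\ref{weak}. I would recommend promoting your shortcut to the primary proof.
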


\begin{proof} Suppose that we can find an open subset $\tU\subset
  V\subset \hX$ such that $g|_{g^{-1}(V)}:g^{-1}(V)\to V$ is an
  isomorphism and $\ccodim (\hX\setminus V)\ge 2$.  Since $g$ is a
  weak $S_2$-extension of $g|_{g^{-1}(V)}:g^{-1}(V)\to V$, by
  Proposition~\ref{411}, $g$ is the $S_2$-extension.  Moreover, the
  identity map $V\to V$ has an $S_2$-extension, namely, $\hX\to \hX$.
  It follows that $g$ is an isomorphism.

  Let $V$ be the complement of the support of the coherent
  $\cO_{\hX}$-module $f_*\cO_Y/\cO_{\hX}$. This is an open set
  containing $\tU$.  Note that $g|_{g^{-1}(V)}:g^{-1}(V)\to V$ is a
  continuous bijection; finiteness implies that it is closed, hence a
  homeomorphism.  Since the induced map of sheaves is obviously an
  isomorphism, we obtain a scheme isomorphism. By Lemma~\ref{R1}, $V$
  contains all codimension one points not in $\tU$.  It follows that
  $\codim \tX\setminus
  V\ge 2$.  The same holds for the c-codimension.  Indeed, the
  equidimensionality hypothesis implies that a point $x$ has the
  same codimension in any irreducible component containing
  it.
\end{proof}

Putting together Propositions~\ref{411} and \ref{iso} and using the
fact that a weak $S_\rho$-extension is a weak $S_2$-extension, we obtain the
following result above weak $S_\rho$-extensions.

\begin{theorem} \label{weak} \mbox{}
\begin{enumerate}  \item Suppose $\ccodim Z\ge 2$ and $\cIC^s (X,
  \z_{1 *} \Osh_{\tU})$ is defined.  Then any weak $S_\rho$-extension of
  $(X,\z_1)$ coincides with the $S_2$-extension.
\item Suppose that $\tU$ is reduced and the integral closure of $X$
  relative to $\tU$ is finite over $X$.  Then any reduced weak
  $S_\rho$-extension $(Y,f)$ of $(X,\z_1)$ that is $R_1$ outside of
  $f^{-1}(U)$ coincides with the weak $S_2$-extension $(\hX,\z)$
  constructed in Proposition~\ref{univprop}.
\end{enumerate}
\end{theorem}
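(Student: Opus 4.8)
The plan is to derive both statements from the $S_2$-theory already developed, via the elementary observation that every $\rho\in W$ satisfies $\rho\ge\rho_2$, so that $S_\rho$ implies $S_2$; hence any weak $S_\rho$-extension is in particular a weak $S_2$-extension. Each part then reduces to the corresponding statement for weak $S_2$-extensions.

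For part~(1) I would first invoke the characterization recorded just before Theorem~\ref{extension}: when $\ccodim Z\ge 2$, the complex $\cIC^s(X,\z_{1*}\Osh_{\tU})$ is defined precisely when $(X,\z_1)$ admits an $S_2$-extension. Thus the standing hypothesis supplies the $S_2$-extension. Reading the given weak $S_\rho$-extension as a weak $S_2$-extension and applying Proposition~\ref{411} then identifies it with the $S_2$-extension, as required.

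For part~(2) I would verify that a reduced weak $S_\rho$-extension $(Y,f)$ which is $R_1$ outside $f^{-1}(U)$ meets every hypothesis of Proposition~\ref{iso}. The three conditions of Proposition~\ref{univprop} hold: \eqref{one} because $\tU\subseteq f^{-1}(U)$ is dense; \eqref{two} because $f$ extends $\z_1$, so that once $f^{-1}(U)$ is identified with $\tU$ the factorization $g_1$ is the identity; and \eqref{three} by Serre's criterion in the form~(3'), since $Y$ is reduced, is $S_2$---being $S_\rho$---off $\tU$ and hence away from $f^{-1}(U)$, and is $R_1$ there by hypothesis. As $f$ is moreover finite, dominant, and takes generic points to generic points, Proposition~\ref{iso} shows that the canonical morphism $g\colon Y\to\hX$ of Proposition~\ref{univprop} is an isomorphism, so $(Y,f)$ coincides with the weak $S_2$-extension $(\hX,\z)$.

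The point requiring the most care is that Proposition~\ref{iso} is stated under the hypothesis that $Y$ is $S_2$ \emph{everywhere}, whereas a weak $S_\rho$-extension only guarantees $S_2$ \emph{off} $\tU$. The resolution is that its proof uses no more than the latter: it exhibits $g$ as a weak $S_2$-extension of $g^{-1}(V)\to V$ for an open $V\supseteq\tU$ with $\ccodim(\hX\setminus V)\ge 2$ and then appeals to Proposition~\ref{411}, for which $Y$ need only be $S_2$ off $g^{-1}(V)\supseteq\tU$. One must also confirm that for an extension the $U$-part $f^{-1}(U)$ is identified with $\tU$, so that $g_1$ is an isomorphism; granting these two observations, both reductions are formal.
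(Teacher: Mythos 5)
Your overall strategy is exactly the paper's: the theorem is stated there immediately after the remark that a weak $S_\rho$-extension is a weak $S_2$-extension (since $\rho\ge\rho_2$), with Proposition~\ref{411} handling part~(1) and Propositions~\ref{univprop} and \ref{iso} handling part~(2). Your part~(1) is correct and complete, and the identification $f^{-1}(U)=\tU$ that you ``grant'' is indeed automatic: $\tU$ is open and dense in $f^{-1}(U)$, and since $\z_1$ is finite, $\tU$ is proper over $U$ and hence closed in the separated $U$-scheme $f^{-1}(U)$, so it equals $f^{-1}(U)$.

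The gap is in your resolution of the $S_2$-hypothesis mismatch in part~(2). It is not true that the proof of Proposition~\ref{iso} ``uses no more than'' $S_2$-ness of $Y$ off $g^{-1}(V)$. That proof invokes Proposition~\ref{411} for the pair $(\hX,V)$, and Proposition~\ref{411} has as a \emph{hypothesis} that an $S_2$-extension exists; the paper supplies it by asserting that $\id:\hX\to\hX$ is the $S_2$-extension of $\id_V:V\to V$. For that assertion one needs $\cO_V$ to lie in ${}^{s^+}\cD(V)^{\ge 0}$, i.e.\ depth bounds at \emph{all} points of $V$, including points of $\tU$ (depth $\ge 2$ at points of codimension $\ge\ccodim(\hX\setminus V)$, in particular); this is precisely where the hypothesis ``$Y$ (and hence $\tU$) is $S_2$'' is consumed, and a weak $S_\rho$-extension by itself gives no depth information at points of $\tU$. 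So your reduction fails at this step as written. The hole can be filled, but only by using data your argument never touches: the reducedness of $\tU$ (which gives $S_1$, hence depth $\ge 1$ at all positive-codimension points) and the standing initial-data assumption $\z_{1*}\Osh_{\tU}\in{}^{p^+_\rho}\cD(U)^{\ge 0}$. When $\ccodim Z\ge 2$, the argument of Remark~\ref{fixed3.5} transfers the latter to $\cO_{\tU}\in{}^{p^+_\rho}\cD(\tU)^{\ge 0}$, giving depth $\ge\rho(\codim(y))\ge 2$ at points $y\in\tU$ of codimension $\ge\ccodim Z$; since $V\supseteq\tU$ forces $\ccodim(\hX\setminus V)\ge\ccodim(\hX\setminus\tU)=\ccodim Z$, these bounds are exactly what membership in ${}^{s^+}\cD(V)^{\ge 0}$ demands at points of $\tU$. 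When $\ccodim Z=1$, the initial-data assumption says $\z_{1*}\Osh_{\tU}$ is $S_\rho$, hence $\tU$, and therefore all of $Y$, is $S_2$, and Proposition~\ref{iso} applies verbatim. With this supplement (which the paper itself also omits), your proof of part~(2) goes through.
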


\subsection{Finite $S_\rho$-ification}

We now apply our results to the finite $S_\rho$-ification problem.
Recall that a finite $S_\rho$-ification of a scheme $X$ is an $S_\rho$
scheme $\hX$ together with a finite birational map $\hz:\hX\to X$.  If
we let $U$ be an open dense subset of $X$ on which $\hz$ is an
isomorphism, we see that a finite $S_\rho$-ification may be viewed as
a weak $S_\rho$-extension of the identity map $U\to U$.  (Observe,
however, that a weak $S_\rho$-extension of the identity can be
defined without assuming $U$ is $S_\rho$.)

\begin{theorem}\label{srification}\mbox{}
\begin{enumerate} \item \label{srho1} Assume that the $S_\rho$ locus
  of $X$ contains an open dense set whose complement has c-codimension
  at least $2$.  Then,
\begin{enumerate}
\item \label{parta} If the complex $\cIC^{p_\rho} (X, \Osh_U)$ is a sheaf for such
  an open set $U$, then $\cIC^{p_\rho} (X, \Osh_V)$ is a sheaf for any
  such $V$, and they are all isomorphic.
\item \label{partb} The scheme $X$ has a finite $S_\rho$-ification
  which is an isomorphism off of a closed set of c-codimension at
  least $2$ if and only if $\cIC^{p_\rho} (X, \Osh_U)$ is a sheaf for
  any such open set $U$.
\item If it exists, it is unique and coincides with the unique finite
  $S_2$-ification which is an isomorphism on the $S_2$ locus $W$.  In
  particular, the $S_2$ and $S_\rho$ loci coincide, and this finite
  $S_\rho$-ification can be given explicitly as $\bSpec(\cIC^{s} (X,
  \Osh_W))\to X$.
\end{enumerate}

\item \label{srho2} Assume that the $S_\rho$ locus of $X$ contains a reduced
  open dense set $U$ such that the integral closure $\hX$ of
  $X$ relative to $U$ is finite over $X$.  Then, $X$ has a finite
  $S_\rho$-ification which is $R_1$ off of $U$ if and only if $\hX$ is
  $S_\rho$.  If such a finite $S_\rho$-ification exist, it coincides
  with the unique $S_2$-ification which is $R_1$ off of $U$.
\end{enumerate}
\end{theorem}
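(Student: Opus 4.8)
The plan is to exploit the remark made just before the statement: a finite $S_\rho$-ification of $X$ that is an isomorphism off a closed set of c-codimension at least $2$ is exactly a weak $S_\rho$-extension of an identity morphism $\id_U:U\to U$, where $U$ is its (open, dense) isomorphism locus. Taking $\z_1=\id_U$, so that $\z_{1*}\Osh_U=\Osh_U$, this lets me feed everything into Theorems~\ref{extension} and~\ref{weak}. The organizing object for part~\eqref{srho1} is the full $S_2$-locus $W$ of $X$: since $\rho\ge\rho_2$ implies $S_\rho\Rightarrow S_2$ pointwise, the $S_\rho$-locus lies in $W$, so every admissible $U$ satisfies $U\subset W$, and by monotonicity of c-codimension under inclusion $\ccodim(X\setminus W)\ge 2$. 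As $W$ is $S_2$, the complex $\cIC^s(X,\Osh_W)$ is defined and, by Remark~\ref{jstar}, equals the coherent sheaf of algebras $j_{W*}\Osh_W$, giving the canonical finite $S_2$-ification $\bSpec(j_{W*}\Osh_W)\to X$.

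For~\eqref{parta}, the point is that every $\cIC^{p_\rho}(X,\Osh_U)$ is secretly this one object. I would first note it is defined by Proposition~\ref{ICdefined}(1) (as $U$ is $S_\rho$ and $\Supp\Osh_U=U$), and, arguing as in Theorem~\ref{extension}, that it is a sheaf if and only if $\cIC^s(X,\Osh_U)\in{}^{p_\rho^+}\cD(X)^{\ge 0}$. I then establish the key identity $\cIC^s(X,\Osh_U)=j_{U*}\Osh_U=j_{W*}\Osh_W$: factoring $j_U=j_W\circ j'$ with $j':U\hookrightarrow W$, this reduces to $\Osh_W\cong j'_*\Osh_U$, which is the $S_2$ extension-across-codimension-two phenomenon, i.e.\ the vanishing of $\underline{H}^0$ and $\underline{H}^1$ with supports in $W\setminus U$ coming from $\depth_x\Osh_W\ge 2$ for all $x\in W\setminus U$ (here $\codim x\ge 2$ since $\ccodim(W\setminus U)\ge 2$, and $W$ is $S_2$). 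Hence ``$\cIC^{p_\rho}(X,\Osh_U)$ is a sheaf'' becomes the single $U$-independent condition $j_{W*}\Osh_W\in{}^{p_\rho^+}\cD(X)^{\ge 0}$, and when it holds all these complexes coincide with $j_{W*}\Osh_W$. I expect this codimension-two extension step to be the main technical obstacle.

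Parts~\eqref{partb} and~(c) then follow formally. If $\cIC^{p_\rho}(X,\Osh_U)$ is a sheaf, then by Theorem~\ref{extension} the scheme $\bSpec(j_{W*}\Osh_W)\to X$ is the $S_2$-extension of $\id_U$; it is $S_\rho$ in codimension $\ge 2$ by Remark~\ref{fixed3.5}, and in codimension $\le 1$ it is an isomorphism onto $W$, where the $S_2$ and $S_\rho$ conditions coincide, so it is $S_\rho$ throughout and is the desired finite $S_\rho$-ification, an isomorphism off $X\setminus W$. Conversely, a finite $S_\rho$-ification that is an isomorphism off a c-codimension $\ge 2$ set is a weak $S_\rho$-extension of the identity on its isomorphism locus, so by the first part of Theorem~\ref{weak} it equals $\bSpec(j_{W*}\Osh_W)$; its being $S_\rho$ forces $j_{W*}\Osh_W\in{}^{p_\rho^+}\cD(X)^{\ge 0}$, and~\eqref{parta} upgrades this to all $U$. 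Uniqueness and the identification with $\bSpec(\cIC^s(X,\Osh_W))$ are again the first part of Theorem~\ref{weak}; and since that scheme is isomorphic to $X$ over $W$ and is $S_\rho$, the locus $W$ is also the $S_\rho$-locus, giving the coincidence of loci.

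For part~\eqref{srho2} I would drop the codimension hypothesis and work instead with the integral closure $\hX$ of $X$ relative to $U$, finite by assumption. By Serre's criterion (condition (3$'$) following Proposition~\ref{univprop}) it is reduced and is $S_2$ and $R_1$ away from $U$, so the morphism $\z:\hX\to X$ of Proposition~\ref{univprop} is a reduced weak $S_2$-extension of $\id_U$ that is $R_1$ off $U$; the second part of Theorem~\ref{weak} shows it is the unique such. If $\hX$ is $S_\rho$ it is immediately a finite $S_\rho$-ification that is $R_1$ off $U$. Conversely, given a finite $S_\rho$-ification $f:Y\to X$ that is $R_1$ off $U$, the scheme $Y$ is both $S_\rho$ (hence $S_2$) and $R_1$ off $U$, so $Y$ is reduced; the second part of Theorem~\ref{weak} then forces $Y\cong\hX$, whence $\hX$ is $S_\rho$. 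The same identification shows any such $S_\rho$-ification coincides with the $S_2$-ification $(\hX,\z)$, completing the proof.
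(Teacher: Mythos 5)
Your proposal is correct and follows essentially the same route as the paper: identify a finite $S_\rho$-ification that is an isomorphism off a c-codimension $\ge 2$ set with a weak $S_\rho$-extension of $\id_U$, feed this into Theorems~\ref{extension} and~\ref{weak}, and settle part~\eqref{parta} by uniqueness of $S_2$-sheaf extensions. The one organizational difference is in~\eqref{parta}: the paper compares $U$ and $V$ through $U\cap V$ (both $j_*\Osh_U$ and $i_*\Osh_V$ are $S_2$ sheaves extending $\Osh_{U\cap V}$, hence isomorphic), whereas you route everything through the $S_2$ locus $W$ via the identity $j_{U*}\Osh_U=j_{W*}\Osh_W$; this is the same uniqueness principle, and it has the small advantage of making part~(c) automatic, since the paper needs the analogous identity $\cIC^s(X,\Osh_U)=\cIC^s(X,\Osh_W)$ at the end anyway. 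Two small patches are needed. First, when $\ccodim(X\setminus U)=n>2$, Remark~\ref{fixed3.5} only yields $S_\rho$ at points of codimension $\ge n$, not $\ge 2$; the gap is harmless because every point not lying over $U$ has codimension $\ge n$ (equidimensionality), so the remaining points of codimension $<n$ lie over $U$, where the map is an isomorphism and $U$ is $S_\rho$ by hypothesis. Second, in part~\eqref{srho2}, ``$S_2$ and $R_1$ off $U$'' does not by itself imply $Y$ is reduced (e.g., $\Spec k[x,y]/(x^2)$ is $S_2$ and vacuously $R_1$ off a dense open set, yet nowhere reduced); you need $R_0$ at the generic points, which holds because they all lie in $f^{-1}(U)\cong U$ and $U$ is assumed reduced --- this is exactly where that hypothesis enters.
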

\begin{proof}
  The second part follows immediately from Proposition~\ref{univprop}
  and Theorem~\ref{weak}.  Note that since $U$ is reduced, the
  integral closure of $X$ relative to $U$ is automatically reduced.
  Thus, it is unnecessary to pass to the associated reduced scheme as
  in Proposition~\ref{univprop}.

For the first part, assume that $U\hookrightarrow X$ is open, dense,
and $S_\rho$ with $\ccodim (X\setminus U)\ge 2$.  Since a finite
$S_\rho$-ification that is an isomorphism over $U$ is the same thing
as a weak $S_\rho$-extension of $\id:U\to U$, Theorem~\ref{extension}
implies that this exists if and only if $\cIC^{p_\rho} (X, \Osh_U)$ is
a sheaf, in which case it coincides with $\cIC^{s} (X, \Osh_U)\cong
j_{*}(\cO_U)$ (where the last isomorphism uses Remark~\ref{jstar}).

Suppose that this is the case and that
$V\overset{i}{\hookrightarrow}X$ is another open, dense, $S_\rho$
subscheme with the c-codimension of its complement at least $2$.  Both
$j_*(\cO_U)$ and $i_*(\cO_V)\cong\cIC^{s} (X, \Osh_V)$ are $S_2$
coherent sheaves extending $\cO_{U\cap V}$, so they are isomorphic;
they are both isomorphic to $\cIC^{s} (X, \Osh_{U\cap V})$.  (Since
$\ccodim (X\setminus(U\cap V))\ge 2$, the $\IC$ sheaf is defined.)  In
particular, $i_*(\cO_V)\in {}^{p_\rho}\cD(X)^{\ge 0}$, so it equals
$\cIC^{p_\rho} (X, \Osh_V)$.  Thus, this complex is a sheaf and
coincides with $\cIC^{p_\rho} (X, \Osh_U)$.

Part~\eqref{partb} now follows from Theorems~\ref{extension} and
\ref{weak} because such a finite $S_\rho$-ification is the same thing
as a weak $S_\rho$ extension of $\id:U\to U$ (with $U$ as above).
Part~\eqref{parta} also shows that if this $S_\rho$-ification exists,
it is unique.  Finally, observe that $U\subset W$, so $\cIC^{s} (X,
\Osh_{U})=\cIC^{s} (X, \Osh_{W})$.  This implies that $\bSpec(\cIC^{s}
(X, \Osh_{U})|_W)|\cong W$, so $W$ is $S_\rho$.

\end{proof}

\begin{rmk}  Even in the context of part~\eqref{srho1} of the
  theorem, there can be other finite $S_\rho$-ifications which are not
  isomorphisms over an open, dense set whose complement has
  c-codimension at least $2$.  Indeed, take any variety which is
  $S_2$, but not $R_1$.  Then, the identity map and the normalization
  are non-isomorphic finite $S_2$-ifications.
\end{rmk}

\begin{corollary} If the non-$S_2$ locus of $X$ has c-codimension at
  least $2$, then there is a unique finite $S_2$-ification which is an
  isomorphism on a dense, open set whose complement has c-codimension
  at least $2$.  Moreover, it is an isomorphism over the $S_2$ locus.
 \end{corollary}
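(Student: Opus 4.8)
The plan is to read this off from Theorem~\ref{srification}\eqref{srho1} by specializing to $\rho=\rho_2$, for which $S_\rho$ is precisely the Serre condition $S_2$ and the associated perversity $p_{\rho_2}$ is the $S_2$-perversity $s$. The point that makes the corollary essentially automatic is Remark~\ref{jstar}: the complex $\cIC^s(X,\cF)$ is always a sheaf---indeed $j_*\cF$---whenever it is defined. Thus the hypothesis ``$\cIC^{p_\rho}(X,\Osh_U)$ is a sheaf'' appearing in part~\eqref{partb} of the theorem reduces, when $\rho=\rho_2$, to the mere assertion that $\cIC^s(X,\Osh_U)$ is \emph{defined}.

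First I would let $U=W$ be the $S_2$ locus of $X$. For the schemes under consideration---finite type over a Noetherian base admitting a dualizing complex---this locus is open, and it is dense since it contains every generic point. The hypothesis that the non-$S_2$ locus has c-codimension at least $2$ then says exactly that $U=W$ is a dense open subscheme with $\ccodim(X\setminus U)\ge 2$, which is the standing assumption of Theorem~\ref{srification}\eqref{srho1} for $\rho=\rho_2$. Because $U$ is $S_2$ and $\Supp(\Osh_U)=U$, Proposition~\ref{ICdefined} guarantees that $\cIC^{p_{\rho_2}}(X,\Osh_U)=\cIC^s(X,\Osh_U)$ is defined, hence (by the preceding paragraph) is the sheaf $j_*\Osh_U$.

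With this verified, parts~\eqref{parta} and~\eqref{partb} of the theorem furnish a finite $S_2$-ification that is an isomorphism off a closed set of c-codimension at least $2$, and the third statement of Theorem~\ref{srification}\eqref{srho1} supplies its uniqueness together with the explicit form $\bSpec(\cIC^s(X,\Osh_W))\to X$ and the fact that it is an isomorphism over the entire $S_2$ locus $W$. This last fact is exactly the ``moreover'' clause of the corollary. I do not expect a genuine obstacle here: the corollary is in essence the $\rho=\rho_2$ shadow of Theorem~\ref{srification}, with Remark~\ref{jstar} eliminating the only substantive hypothesis. The single external input is the openness of the $S_2$ locus, which lets me take $U=W$ directly; this is a standard property of excellent schemes (equivalently, of schemes admitting a dualizing complex), so it should require no new argument.
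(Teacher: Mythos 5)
Your proposal is correct and follows essentially the same route as the paper: the paper's own proof is the one-line observation that the corollary follows from Theorem~\ref{srification} because $\cIC^s$ takes sheaves to sheaves (Remark~\ref{jstar}), which is exactly your central point that for $\rho=\rho_2$ the ``is a sheaf'' hypothesis collapses to ``is defined.'' Your additional details---taking $U=W$ via openness of the $S_2$ locus and invoking Proposition~\ref{ICdefined} for definedness---merely make explicit what the paper leaves implicit.
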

 \begin{proof} This follows from the theorem because $\cIC^s$ takes
   sheaves to sheaves.
\end{proof}

\end{document}